\newtheorem{thm}{Theorem}
\newtheorem{lem}[thm]{Lemma}
\newtheorem{prop}[thm]{Proposition}
\begin{document}

\title{The conformal-Killing equation on $G_{2}$- and $\mathrm{Spin}_{7}$-structures}

\author{Liana David}

\maketitle

{\bf Abstract.} Let $M$ be a $7$-manifold with a $G_{2}$-structure
defined by $\phi\in \Omega^{3}_{+}(M).$ We prove that $\phi$ is
conformal-Killing with respect to the associated metric $g_{\phi}$
if and only if the $G_{2}$-structure is nearly parallel.
Similarly, let $M$ be an $8$-manifold with a
$\mathrm{Spin}_{7}$-structure defined by
$\psi\in\Omega^{4}_{+}(M).$ We prove that $\psi$ is
conformal-Killing with respect to the associated metric $g_{\psi}$
if and only if the $\mathrm{Spin}_{7}$-structure is parallel.\\

{\it Key words:} Conformal-Killing equation, fundamental forms,
$G_{2}$ and $\mathrm{Spin}_{7}$-structures.

\section{Introduction}

A vector field $X$ on a Riemannian manifold $(M^{m}, g)$ is
Killing if its covariant derivative $\nabla X$ with respect to the
Levi-Civita connection $\nabla$ is totally skew-symmetric, or
$\nabla X =\frac{1}{2}dX$ (here and everywhere in this note we
identify vector fields with $1$-forms using the Riemannian
duality). More generally, a $p$-form $\psi\in\Omega^{p}(M)$ is
Killing if $\nabla \psi = \frac{1}{p+1}d\psi .$ In the same way as
Killing forms generalize Killing vector fields, conformal-Killing
forms generalize conformal-Killing vector fields.  In order to
define conformal-Killing $p$-forms ($1\leq p\leq m$) on $(M,g)$,
consider the decomposition of $T^{*}M\otimes \Lambda^{p}(M)$ into
irreducible $O(m)$-sub-bundles:
\begin{equation}\label{dec-tangent}
T^{*}M\otimes \Lambda^{p}(M)\cong \Lambda^{p+1}(M)\oplus
\Lambda^{p-1}(M)\oplus {\mathcal T}^{p}(M),
\end{equation}
where ${\mathcal T}^{p}(M)$ is the intersection of the kernels of
skew-symmetrization and natural contraction maps. The covariant
derivative $\nabla\psi$ of a $p$-form $\psi \in\Omega^{p}(M)$ is a
section of $T^{*}M\otimes \Lambda^{p}(M)$ and its projections onto
$\Lambda^{p+1}(M)$ and $\Lambda^{p-1}(M)$ according to
decomposition (\ref{dec-tangent}), are essentially given by the
exterior derivative $d\psi$ and the codifferential $\delta \psi$
respectively. The projection of $\nabla \psi$ onto the remaining
component ${\mathcal T}^{p}(M)$ defines the conformal-Killing
operator. A $p$-form $\psi$ is conformal-Killing if it belongs to
the kernel of the conformal-Killing operator, i.e. $\nabla\psi$ is
a section of the direct sum bundle
$\Lambda^{p+1}(M)\oplus\Lambda^{p-1}(M)$, or, equivalently, the
conformal-Killing equation
\begin{equation}
\nabla_{X}\psi = \frac{1}{p+1}i_{X}d\psi -\frac{1}{m-p+1}X\wedge
\delta \psi ,\quad\forall X\in TM.
\end{equation}
is satisfied.

Conformal-Killing forms exist on spaces of constant curvature, on
Sasaki manifolds and on some classes of K\"{a}hler manifolds (like
Bochner-flat or conformally Einstein) where they are closely
related to the so called Hamiltonian $2$-forms \cite{acg}.
Conformal-Killings forms exist also on Riemannian manifolds
admitting twistor spinors \cite{sem}. The space of
conformal-Killing forms on a Riemannian manifold is always finite
dimensional (even in the non-compact case) and an upper bound for
the dimension is realized on the standard sphere, where any
conformal-Killing form is a sum of two eigenforms of the Laplace
operator, with eigenvalues depending on the dimension and the
degree of the form \cite{sem}.

There are two results in the literature which motivate this note.
The first was proved in \cite{sem} and states that an almost
Hermitian manifold whose K\"{a}hler form is conformal-Killing, is
necessarily nearly K\"{a}hler; the second motivating result was
proved in \cite{liana} and states that an almost
quaternionic-Hermitian manifold whose fundamental $4$-form is
conformal-Killing, is necessarily quaternionic-K\"{a}hler. In this
note we prove the analogous statements for the fundamental form of
$G_{2}$ and $\mathrm{Spin}_{7}$-structures. More precisely, we
prove the following result:

\begin{thm}\label{main}
i) Let $M$ be a $7$-manifold with a $G_{2}$-structure defined by
$\phi\in \Omega^{3}_{+}(M)$ and let $g_{\phi}$ be the associated
Riemannian metric on $M$. Then $\phi$ is conformal-Killing with
respect to $g_{\phi}$ if and only if the $G_{2}$-structure is
nearly parallel.

ii)  Let $M$ be an $8$-manifold with a
$\mathrm{Spin}_{7}$-structure defined by
$\psi\in\Omega^{4}_{+}(M)$ and let $g_{\psi}$ be the associated
Riemannian metric on $M$. Then $\psi$ is conformal-Killing with
respect to $g_{\psi}$ if and only if the
$\mathrm{Spin}_{7}$-structure is parallel.

\end{thm}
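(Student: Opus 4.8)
The plan is to read the conformal-Killing condition, as the introduction explains, as the vanishing of the projection of $\nabla\phi$ (resp. $\nabla\psi$) onto the bundle ${\mathcal T}^{p}(M)$ in the decomposition (\ref{dec-tangent}), and to combine this with the fact that for a $G_{2}$- or $\mathrm{Spin}_{7}$-structure the covariant derivative of the defining form \emph{is} the intrinsic torsion, hence lives in a small, explicitly known subbundle. Since the stabilizer $G\subset SO(m)$ fixes the form, $\nabla_{X}\phi$ is the image of the $\mathfrak{g}^{\perp}$-part of the torsion acting on $\phi$, so $\nabla\phi$ is a section of the torsion bundle ${\mathcal W}$, which decomposes under $G$ into the Fern\'andez--Gray classes. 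The conformal-Killing operator restricted to ${\mathcal W}$ is a $G$-equivariant bundle map ${\mathcal W}\to{\mathcal T}^{p}(M)$, and by Schur's lemma it acts as a scalar $c_{i}$ on each irreducible summand ${\mathcal W}_{i}$. Thus the conformal-Killing structures are exactly those whose torsion lies in $\bigoplus_{c_{i}=0}{\mathcal W}_{i}$, and everything reduces to deciding, for each class, whether $c_{i}=0$. This mirrors the strategy behind the almost-Hermitian and quaternionic results quoted above.

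For the $G_{2}$-case ($m=7$, $p=3$) the torsion bundle is ${\mathcal W}\cong T^{*}M\otimes\Lambda^{3}_{7}(M)\cong\mathbf{1}\oplus\mathbf{7}\oplus\mathbf{14}\oplus\mathbf{27}$, the four summands being the torsion forms $\tau_{0},\tau_{1},\tau_{2},\tau_{3}$. I would decompose the $O(7)$-pieces $\Lambda^{4}(M)\cong\mathbf{1}\oplus\mathbf{7}\oplus\mathbf{27}$ and $\Lambda^{2}(M)\cong\mathbf{7}\oplus\mathbf{14}$ under $G_{2}$ and compute multiplicities, obtaining ${\mathcal T}^{3}(M)\cong\mathbf{7}\oplus\mathbf{14}\oplus\mathbf{27}\oplus\mathbf{64}\oplus\mathbf{77}$; the decisive feature is that the trivial summand $\mathbf{1}$ does \emph{not} occur in ${\mathcal T}^{3}(M)$. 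Hence the $\tau_{0}$-component is automatically annihilated by the ${\mathcal T}^{3}$-projection, whereas $\mathbf{7},\mathbf{14},\mathbf{27}$ each occur once, so (granting nonvanishing of the corresponding coefficients) the condition forces $\tau_{1}=\tau_{2}=\tau_{3}=0$. This leaves the pure class $\tau_{0}$, i.e. $d\phi=\tau_{0}\,{*}\phi$ and $d{*}\phi=0$; finally $0=d^{2}\phi=d\tau_{0}\wedge{*}\phi$ together with the injectivity of $\alpha\mapsto\alpha\wedge{*}\phi$ on $1$-forms gives $\tau_{0}$ constant, so the structure is nearly parallel. The converse is immediate, since a nearly parallel structure has pure $\tau_{0}$-torsion, which lies in the kernel of the ${\mathcal T}^{3}$-projection.

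For the $\mathrm{Spin}_{7}$-case ($m=8$, $p=4$) the torsion bundle is ${\mathcal W}\cong T^{*}M\otimes\mathfrak{m}\cong\mathbf{8}\otimes\mathbf{7}\cong\mathbf{8}\oplus\mathbf{48}$, which contains \emph{no} trivial summand --- this is exactly what strengthens the conclusion. Using ${*}\psi=\psi$ one has that $\delta\psi$ equals $\pm\,{*}d\psi$, so the conformal-Killing equation $\nabla_{X}\psi=\tfrac{1}{5}i_{X}d\psi-\tfrac{1}{5}X\wedge\delta\psi$ is governed entirely by $d\psi$, and I would invoke the classical Fern\'andez fact that the map from the torsion to $d\psi$, with $\Lambda^{5}(M)\cong\Lambda^{3}(M)\cong\mathbf{8}\oplus\mathbf{48}$, is a $\mathrm{Spin}_{7}$-equivariant isomorphism; thus $d\psi=0$ is equivalent to the structure being parallel. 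Decomposing $\Lambda^{4}(M)\cong\mathbf{1}\oplus\mathbf{7}\oplus\mathbf{27}\oplus\mathbf{35}$ and computing multiplicities --- most efficiently through the reciprocity $\mathrm{mult}(\mathbf{8},\mathbf{8}\otimes V)=\mathrm{mult}(V,\mathbf{1}\oplus\mathbf{7}\oplus\mathbf{21}\oplus\mathbf{35})$ coming from $\mathbf{8}\otimes\mathbf{8}=\mathbf{1}\oplus\mathbf{7}\oplus\mathbf{21}\oplus\mathbf{35}$ --- shows that both $\mathbf{8}$ and $\mathbf{48}$ survive in ${\mathcal T}^{4}(M)$. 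Granting nonvanishing of the associated coefficients, the condition forces both torsion components, hence $\nabla\psi$ itself, to vanish, so the structure is parallel; the converse is trivial.

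The main obstacle is precisely the nonvanishing of the scalars $c_{i}$: the multiplicity count only shows that each surviving class \emph{may} contribute to ${\mathcal T}^{p}(M)$, not that it does. To settle this I would make the equivariant projection explicit, writing $\nabla\phi$, $d\phi$, $\delta\phi$ (resp. their $\mathrm{Spin}_{7}$ analogues) in terms of the full torsion tensor in a local frame, substituting into the conformal-Killing equation, and reading off the coefficient of each irreducible type; equivalently it suffices to test one nonzero representative of each class --- for instance a locally conformally parallel $\mathrm{Spin}_{7}$-structure with $d\psi=\theta\wedge\psi$ --- and check that the equation fails. I expect the $\mathrm{Spin}_{7}$ bookkeeping to be the more delicate part, as it requires locating the copies of $\mathbf{8}$ and $\mathbf{48}$ inside ${\mathcal T}^{4}(M)$ via the decompositions of $\mathbf{8}\otimes\mathbf{27}$ and $\mathbf{8}\otimes\mathbf{35}$, whereas the $G_{2}$-computation is transparent because there the only structural point that matters is the absence of the trivial representation in ${\mathcal T}^{3}(M)$.
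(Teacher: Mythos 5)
Your framework is exactly the paper's: restrict the algebraic conformal-Killing operator to the bundle in which $\nabla\phi$ (resp.\ $\nabla\psi$) lives, decompose that bundle into $G_{2}$- (resp.\ $\mathrm{Spin}_{7}$-) irreducible summands, and use Schur's lemma so that the operator acts by a constant $c_{i}$ on each summand. Your multiplicity counts also check out: ${\mathcal T}^{3}(M)\cong\mathbf{7}\oplus\mathbf{14}\oplus\mathbf{27}\oplus\mathbf{64}\oplus\mathbf{77}$ contains no trivial summand, and both $\mathbf{8}$ and $\mathbf{48}$ occur, each with multiplicity one, in ${\mathcal T}^{4}(M)$. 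In fact your observation that the trivial representation is absent from ${\mathcal T}^{3}(M)$ gives a cleaner proof than the paper's of the easy direction for $G_{2}$ (nearly parallel $\Rightarrow$ conformal-Killing): the paper instead verifies by hand that $\mathrm{pr}_{\mathbb{R}_{\phi}(M)}\circ{\mathcal T}_{3}=0$, using the pointwise identity $3\gamma\wedge\beta+e^{k}\wedge\gamma\wedge i_{e_{k}}\beta=0$.

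However, there is a genuine gap, and you have named it yourself: everything hinges on the nonvanishing of the constants $c_{i}$ on the summands $\mathbf{7},\mathbf{14},\mathbf{27}$ (for $G_{2}$) and $\mathbf{8},\mathbf{48}$ (for $\mathrm{Spin}_{7}$), and your proposal only ``grants'' this. Multiplicity one of a summand in ${\mathcal T}^{p}(M)$ does not decide whether the equivariant map is an isomorphism or zero on that summand, and the theorem is false without these nonvanishing statements: if, say, the constant on $\mathbf{14}$ vanished, then $G_{2}$-structures with torsion in $\mathbf{1}\oplus\mathbf{14}$ would also have conformal-Killing fundamental form without being nearly parallel, and if the constant on $\mathbf{8}$ or $\mathbf{48}$ vanished, the $\mathrm{Spin}_{7}$ conclusion would fail. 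This verification is not a routine afterthought --- it is the entire computational content of the paper: for $G_{2}$ one evaluates ${\mathcal T}_{3}$ on the test element $\eta=e^{1}\otimes *_{\phi}(e^{2}\wedge\phi)$ and checks that its projections to $\Lambda^{2}_{7}(M)$, $\Lambda^{2}_{14}(M)$ and $S^{2}_{0}(M)$ are all nonzero (relations (\ref{la-1}), (\ref{la-2}), (\ref{rel1}), (\ref{rel2})); for $\mathrm{Spin}_{7}$ one evaluates ${\mathcal T}_{4}$ on $\eta=e^{0}\otimes\alpha_{0}$ with $\alpha_{0}=i_{e_{0}}\psi\wedge e^{1}-i_{e_{1}}\psi\wedge e^{0}$, composes with an explicit equivariant map $P$ into $\Lambda^{3}(M)$, and checks that both the $\Lambda^{3}_{8}(M)$- and $\Lambda^{3}_{48}(M)$-components of the result are nonzero (Lemmas \ref{l1} through \ref{FINAL}). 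Your suggested remedies (writing the projection in a local frame, or testing on a locally conformally parallel structure) are the right kind of step, but until one of them is carried out for every surviving summand, the implications ``conformal-Killing $\Rightarrow$ nearly parallel'' and ``conformal-Killing $\Rightarrow$ parallel'' remain unproven.
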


The plan of this note is the following. In Section \ref{g2} we
recall basic facts on $G_{2}$-structures and we prove the
statement for $G_{2}$. The statement for $\mathrm{Spin}_{7}$
is proved in Section \ref{spin7}.\\

{\bf Acknowledgements.} I am grateful to Uwe Semmelmann for useful
discussions and interest in this work. Useful discussions with
Andrei Moroianu are also acknowledged. This work is partially
supported by a CNCSIS grant IDEI "Structuri geometrice pe
varietati diferentiabile" (code no. 1187/2008).

\section{The statement for $G_{2}$}\label{g2}

\subsection{Basic facts about $G_{2}$-structures}\label{g2-basic}
Let $\{ e_{1}, \cdots , e_{7}\}$ be the standard basis of $V=
\mathbb{R}^{7}$ and $\{ e^{1}, \cdots , e^{7}\}$ the dual basis.
We shall use the notation $e^{i_{1}\cdots i_{k}}$ for the wedge
product $e^{i_{1}}\wedge\cdots \wedge
e^{i_{k}}\in\Lambda^{k}(V^{*})$. Recall that $G_{2}< GL(V)$ is a
$14$-dimensional compact, connected, simple Lie group defined as
the stabilizer of the $3$-form
\begin{equation}\label{phi}
\phi_{0} := e^{123} +e^{145} + e^{167}+ e^{246} - e^{257} -e^{347}
- e^{356}.
\end{equation}
It can be shown any $g\in G_{2}$ preserves the standard metric
$\langle\cdot , \cdot\rangle_{7}$ and the orientation of $V$ for
which the basis $\{ e^{1},\cdots , e^{7}\} $ is orthonormal and
positive oriented. Let $*_{7}$ be the associated Hodge star
operator. It follows that any $g\in G_{2}$ stabilizes also the
$4$-form
\begin{equation}\label{starphi}
*_{7}\phi_{0}= e^{4567} + e^{2367} + e^{2345} + e^{1357} -
e^{1346} - e^{1256} - e^{1247}.
\end{equation}

The standard representation of $G_{2}$ on $V\cong V^{*}$ is
irreducible, but the representation of $G_{2}$ on higher degree
tensors is reducible, in general. For our purpose we need to know
the irreducible decompositions of the $G_{2}$-modules
$\Lambda^{2}(V^{*})$ and $S^{2}(V^{*})$, only. It is known that
$\Lambda^{2}(V^{*})$ decomposes into two $G_{2}$-irreducible
sub-representations
\begin{equation}\label{d1}
\Lambda^{2}(V^{*}) =\Lambda^{2}_{7}(V^{*})\oplus
\Lambda^{2}_{14}(V^{*})
\end{equation}
where
$$
\Lambda^{2}_{7}(V^{*}) := \{ *_{7}(\alpha\wedge *_{7}\phi_{0} ),\
\alpha\in V^{*}\} = \{ \beta\in\Lambda^{2}(V^{*}),\ 2*_{7}\beta =
\beta\wedge \phi_{0} \}
$$
is of dimension $7$ and
$$
\Lambda^{2}_{14}(V^{*}):= \{\beta\in\Lambda^{2}(V^{*}),\
*_{7}\beta =-\beta\wedge \phi_{0}\}
$$
is isomorphic to the adjoint representation and has dimension
$14.$ Similarly, $S^{2}(V^{*})$ decomposes into two
$G_{2}$-irreducible sub-representations
\begin{equation}\label{d2}
S^{2}(V^{*}) = S^{2}_{0}(V^{*})\oplus \mathbb{R}\langle\cdot ,
\cdot\rangle_{7}
\end{equation}
where $S^{2}_{0}(V^{*})$ denotes trace-less symmetric
$(2,0)$-tensors and $\mathbb{R}\langle\cdot , \cdot\rangle_{7}$ is
the $1$-dimensional representation generated by $\langle\cdot ,
\cdot\rangle_{7}.$ For proofs of these facts and more about the
representation theory of $G_{2}$, see e.g. \cite{bryant2},
\cite{andrei}.\\

Consider now a $7$-dimensional manifold $M$ with a
$G_{2}$-structure, defined by $\phi\in\Omega^{3}_{+}(M) .$ This
means that $\phi$ is a smooth $3$-form, linearly equivalent to
$\phi_{0}$ (i.e. at any point $p\in M$, there is a linear
isomorphism $f_{p}: T_{p}M\rightarrow V$ such that
$f_{p}^{*}(\phi_{0} ) =\phi_{p}$). The stabilizer $G_{2}\subset
GL(V)$ of $\phi_{0}$ being included in $SO(V)$, the standard
metric and orientation of $V$ induce, by means of the isomorphisms
$f_{p}$, a well-defined metric $g_{\phi}$ and an orientation on
$M$, such that $f_{p}$ is an orientation preserving isometry. We
denote by $*_{\phi}$ the associated Hodge star operator and we
freely identify vectors and covectors on $M$ using $g_{\phi}.$ Let
$\nabla$ be the Levi-Civita connection of $g_{\phi}.$ According to
\cite{sal}, \cite{gray} the covariant derivative $\nabla\phi$,
which is a section of $T^{*}M\otimes \Lambda^{3}(M)$, is actually
a section of $T^{*}M\otimes \Lambda^{3}_{7}(M)$, where
$\Lambda^{3}_{7}(M)$ is a sub-bundle of $\Lambda^{3}(M)$, defined
as
$$
\Lambda^{3}_{7}(M)= \{ *_{\phi} (\alpha\wedge \phi ),\ \alpha\in
T^{*}M\} .
$$
We shall identify $\Lambda^{3}_{7}(M)$ with $T^{*}M$ by means of
the isomorphism
\begin{equation}\label{identificare}
\Lambda^{3}_{7}(M)\ni \beta \rightarrow *_{\phi} (\beta\wedge \phi
)\in T^{*}M,
\end{equation}
which is the inverse (up to a multiplicative constant) of the
isomorphism
$$
T^{*}M\ni \alpha\rightarrow *_{\phi} (\alpha\wedge \phi )\in
\Lambda^{3}_{7}(M).
$$
More precisely, the following general identity holds:
\begin{equation}\label{general}
*_{\phi}( *_{\phi}(\alpha \wedge\phi )\wedge \phi ) =
-4\alpha,\quad\forall \alpha\in T^{*}M.
\end{equation}
Finally, from the isomorphism (\ref{identificare}) and the
decompositions (\ref{d1}) and (\ref{d2}), we get the following
decomposition of $T^{*}M\otimes \Lambda^{3}_{7}(M)$ into
irreducible sub-bundles:
\begin{equation}\label{var1}
T^{*}M\otimes \Lambda^{3}_{7}(M) \cong T^{*}M\otimes T^{*}M\cong
\Lambda^{2}_{7}(M)\oplus \Lambda^{2}_{14}(M)\oplus
S^{2}_{0}(M)\oplus\mathbb{R}_{\phi}(M),
\end{equation}
where
\begin{align*}
\Lambda^{2}_{7}(M) &= \{ \alpha\in \Lambda^{2}(M):\
2*_{\phi}\alpha =\alpha\wedge \phi \}\\
\Lambda^{2}_{14}(M) &= \{ \alpha\in \Lambda^{2}(M):\
*_{\phi}\alpha =- \alpha\wedge \phi \},
\end{align*}
$S^{2}_{0}(M)$ denotes the bundle of symmetric $(2,0)$-traceless
tensors and $\mathbb{R}_{\phi}(M)= \langle g_{\phi}\rangle$ is the
trivial rank one bundle generated by $g_{\phi }.$

From (\ref{var1}), there are 16 classes of $G_{2}$-manifolds in
the Gray-Hervella classification \cite{gray}. The
$G_{2}$-structure defined by $\phi$ is called {\it nearly
parallel} if $\nabla\phi$ is a section of the line bundle
$\mathbb{R}_{\phi}(M).$ When viewed as a sub-bundle of
$T^{*}M\otimes \Lambda^{3}_{7}(M)$, $\mathbb{R}_{\phi }(M)$ is
generated by $\sum_{k=1}^{7}e^{k}\otimes *_{\phi}(e^{k}\wedge\phi
)$, where $\{ e^{1}, \cdots , e^{7}\}$ is a local orthonormal
basis of $T^{*}M.$ Thus $\nabla\phi$ is a section of
$\mathbb{R}_{\phi}(M)$ if and only if $\nabla\phi$ is a multiple
of $*_{\phi}\phi .$

\subsection{Proof for the $G_{2}$ statement}

In this Section we prove Theorem \ref{main} {\it i)}. We use a
representation theoretic argument. Similar arguments already
appear in the literature \cite{liana}, \cite{swann2},
\cite{swann3}. Let $M$ be a $7$-manifold with a $G_{2}$-structure
defined by $\phi\in\Omega^{3}_{+}(M).$ With the notations from the
previous Section,  we aim to prove the following Proposition.

\begin{prop}\label{proof-g2} The $3$-form $\phi$ is conformal-Killing
with respect to $g_{\phi}$ if and only if the $G_{2}$-structure
defined by $\phi$ is nearly parallel.
\end{prop}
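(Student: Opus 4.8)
The plan is to reduce the Proposition to a statement about $G_{2}$-representations, since both conditions are pointwise and linear in $\nabla\phi$. For $p=3$ and $m=7$ the conformal-Killing equation for $\phi$ is equivalent to the vanishing of the projection of $\nabla\phi$ onto the summand $\mathcal{T}^{3}(M)$ of the decomposition (\ref{dec-tangent}), while the $G_{2}$-structure is nearly parallel precisely when $\nabla\phi$ is a section of the line bundle $\mathbb{R}_{\phi}(M)$. The crucial point is that $\nabla\phi$ is not an arbitrary section of $T^{*}M\otimes\Lambda^{3}(M)$: it is a section of $T^{*}M\otimes\Lambda^{3}_{7}(M)$, which by (\ref{identificare}) and (\ref{var1}) is $T^{*}M\otimes T^{*}M\cong V^{*}\otimes V^{*}=\mathbb{R}_{\phi}(M)\oplus\Lambda^{2}_{7}(M)\oplus\Lambda^{2}_{14}(M)\oplus S^{2}_{0}(M)$. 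Hence the whole Proposition is the assertion $\ker\pi=\mathbb{R}_{\phi}(M)$, where $\pi$ denotes the $G_{2}$-equivariant map that includes $T^{*}M\otimes\Lambda^{3}_{7}(M)$ into $T^{*}M\otimes\Lambda^{3}(M)$ and then projects onto $\mathcal{T}^{3}(M)$.

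First I would decompose the ambient $O(7)$-modules into $G_{2}$-irreducibles. Using $\Lambda^{4}(V^{*})\cong\Lambda^{3}(V^{*})=\Lambda^{3}_{1}\oplus\Lambda^{3}_{7}\oplus\Lambda^{3}_{27}$ together with (\ref{d1}) and (\ref{d2}), a character computation (see \cite{bryant2}, \cite{andrei}) gives, in the shorthand where an integer denotes the irreducible $G_{2}$-module of that dimension, for the typical fibre $\mathcal{T}^{3}$ of $\mathcal{T}^{3}(M)$,
\[
\Lambda^{4}(V^{*})\oplus\Lambda^{2}(V^{*})\cong 1\oplus 7\oplus 7\oplus 14\oplus 27,
\qquad
\mathcal{T}^{3}\cong 7\oplus 14\oplus 27\oplus 64\oplus 77 .
\]
The only features I need are that $\mathcal{T}^{3}$ contains no trivial summand and that it contains each of the modules $7$, $14$, $27$ with multiplicity one. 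The first of these immediately yields the easy implication: the trivial component $\mathbb{R}_{\phi}(M)$ of $\nabla\phi$ lies in $\ker\pi$, so $\mathbb{R}_{\phi}(M)\subseteq\ker\pi$ and a nearly parallel structure is conformal-Killing. (Equivalently one checks directly that if $\nabla\phi$ is a section of $\mathbb{R}_{\phi}(M)$ then $\delta\phi=0$ and $d\phi$ is a constant multiple of $*_{\phi}\phi$, so that the conformal-Killing equation holds.)

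For the converse I would compare $V^{*}\otimes V^{*}=1\oplus 7\oplus 14\oplus 27$ with $\mathcal{T}^{3}$. Each of the three nontrivial types $\Lambda^{2}_{7}(V^{*})$, $\Lambda^{2}_{14}(V^{*})$, $S^{2}_{0}(V^{*})$ occurs with multiplicity one in both modules, so by Schur's lemma the restriction of $\pi$ to each of these three summands is a scalar multiple of a fixed $G_{2}$-isomorphism onto the corresponding summand of $\mathcal{T}^{3}$, and is therefore either zero or injective. Once the three scalars are known to be nonzero it follows that $\ker\pi=\mathbb{R}_{\phi}(M)$, i.e. a conformal-Killing $\phi$ has vanishing $\Lambda^{2}_{7}$-, $\Lambda^{2}_{14}$- and $S^{2}_{0}$-components of $\nabla\phi$, leaving only the $\mathbb{R}_{\phi}(M)$-component, which is exactly the nearly parallel condition.

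The main obstacle is precisely these three nonvanishing statements: Schur's lemma reduces each to a single scalar but does not decide whether it is zero. I would settle them by an explicit computation in the standard basis $\{e^{i}\}$, choosing a convenient representative in each summand — for instance $*_{7}(e^{1}\wedge*_{7}\phi_{0})$ for $\Lambda^{2}_{7}$, a suitable element of $\Lambda^{2}_{14}$, and a trace-free diagonal symmetric tensor for $S^{2}_{0}$ — transporting it to $V^{*}\otimes\Lambda^{3}_{7}$ by the inverse of (\ref{identificare}), and then subtracting its skew-symmetric ($\Lambda^{4}$) and contraction ($\Lambda^{2}$) parts; the remainder is $\pi$ of the chosen element, and it must be verified to be nonzero in each of the three cases. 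Equivalently, one inserts the four-component torsion decomposition of $\nabla\phi$, together with the standard expressions for $d\phi$ and $\delta\phi$ in terms of these components, into the conformal-Killing equation and matches $G_{2}$-types. This is a finite linear-algebra check and is the only step where genuine computation, as opposed to representation theory, is needed.
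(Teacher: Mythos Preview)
Your proposal is correct and follows essentially the same representation-theoretic strategy as the paper: both reduce the question to showing that the $G_{2}$-equivariant conformal-Killing operator, restricted to $T^{*}M\otimes\Lambda^{3}_{7}(M)\cong\mathbb{R}_{\phi}\oplus\Lambda^{2}_{7}\oplus\Lambda^{2}_{14}\oplus S^{2}_{0}$, vanishes on the trivial summand and is injective on the other three, and both invoke Schur's lemma to reduce the injectivity to three scalar nonvanishing checks settled by an explicit computation in the standard frame.

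There are only tactical differences worth noting. For the easy direction you appeal to your decomposition $\mathcal{T}^{3}\cong 7\oplus 14\oplus 27\oplus 64\oplus 77$ and the absence of a trivial summand, which is elegant and avoids any calculation; the paper instead verifies directly that $\mathrm{pr}_{\mathbb{R}_{\phi}}\circ\mathcal{T}_{3}=0$. For the hard direction you propose choosing one representative in each of $\Lambda^{2}_{7}$, $\Lambda^{2}_{14}$, $S^{2}_{0}$ and computing its image in $\mathcal{T}^{3}$; the paper is slightly more economical, taking a single test element $\eta=e^{1}\otimes *_{\phi}(e^{2}\wedge\phi)$ (whose associated $e^{1}\otimes e^{2}$ meets all three nontrivial summands) and then projecting $\mathcal{T}_{3}(\eta)$ back through the map $\alpha\otimes\beta\mapsto\alpha\otimes *_{\phi}(\beta\wedge\phi)$ onto each of the three summands, checking that all three projections are nonzero. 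Either route works; yours requires computing the $G_{2}$-decomposition of $\mathcal{T}^{3}$ but then needs less bookkeeping for the trivial summand, while the paper's single test element streamlines the three nonvanishing checks at the cost of an extra projection step.
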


In order to prove Proposition \ref{proof-g2}, define the algebraic
conformal-Killing operator
$$
{\mathcal T}_{3}: T^{*}M\otimes \Lambda^{3}_{7}(M)\rightarrow
T^{*}M\otimes \Lambda^{3}(M)
$$
given on decomposable tensors by
\begin{equation}\label{p-1}
{\mathcal T}_{3}(\gamma\otimes\beta )(X) =\frac{3}{4}\gamma (X)
\beta +\frac{1}{4}\gamma\wedge i_{X}\beta -\frac{1}{5} X\wedge
i_{\gamma}\beta
\end{equation}
where $\gamma\in T^{*}M$, $\beta\in \Lambda^{3}_{7}(M)$, $X\in TM$
is identified with the dual $1$-form and $i_{\gamma}\beta :=\beta
(\gamma^{\flat}, \cdot )$ is the interior product of $\beta$ with
the vector field $\gamma^{\flat}$ dual to $\gamma$. (The operator
${\mathcal T}_{3}$ usually acts on the entire $T^{*}M\otimes
\Lambda^{3}(M)$, but we consider its restriction to $T^{*}M\otimes
\Lambda^{3}_{7}(M)$ only, because the covariant derivative
$\nabla\phi$ with respect to the Levi-Civita connection $\nabla$
of $g_{\phi}$ is a section of this bundle). Notice that
$$
{\mathcal T}_{3}(\nabla\phi )(X)= \nabla_{X}\phi
-\frac{1}{4}i_{X}d\phi +\frac{1}{5}X\wedge\delta \phi
,\quad\forall X\in TM.
$$
Thus $\phi$ is conformal-Killing if and only if
\begin{equation}\label{equa}
{\mathcal T}_{3}(\nabla \phi )=0.
\end{equation}
Recall now the decomposition of $T^{*}M\otimes \Lambda^{3}_{7}(M)$
into irreducible sub-bundles:
\begin{equation}\label{prime}
T^{*}M\otimes \Lambda^{3}_{7}(M)\cong \Lambda^{2}_{7}(M)\oplus
\Lambda^{2}_{14}(M)\oplus S^{2}_{0}(M)\oplus\mathbb{R}_{\phi}(M).
\end{equation}
From (\ref{equa}), Proposition \ref{proof-g2} is a consequence of
the following general result.

\begin{prop}\label{prime-prop} The restriction of ${\mathcal T}_{3}$ to $\Lambda^{2}(M)\oplus
S^{2}_{0}(M)$ is injective and the restriction of ${\mathcal
T}_{3}$ to $\mathbb{R}_{\phi}(M)$ is identically zero.
\end{prop}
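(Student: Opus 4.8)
The statement decomposes $T^{*}M\otimes\Lambda^{3}_{7}(M)$ into the four irreducible $G_{2}$-summands of (\ref{prime}), and asks me to understand how the algebraic operator ${\mathcal T}_{3}$ acts on each. Since ${\mathcal T}_{3}$ is a $G_{2}$-equivariant map between the $G_{2}$-module $V^{*}\otimes\Lambda^{3}_{7}(V^{*})$ and $V^{*}\otimes\Lambda^{3}(V^{*})$, Schur's lemma controls everything: on each irreducible summand ${\mathcal T}_{3}$ is either injective (in fact an isomorphism onto an isomorphic copy inside the target) or zero, and the problem reduces to computing one scalar per summand. So the real content is (a) identifying explicit representative elements in each of $\Lambda^{2}_{7}(M)$, $\Lambda^{2}_{14}(M)$, $S^{2}_{0}(M)$ and $\mathbb{R}_{\phi}(M)$, realized as elements of $V^{*}\otimes\Lambda^{3}_{7}(V^{*})$ via the identification (\ref{identificare}), and (b) applying formula (\ref{p-1}) to each representative and checking whether the result is nonzero.

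First I would fix the model $V=\mathbb{R}^{7}$ with $\phi_{0}$ as in (\ref{phi}) and work purely algebraically, since ${\mathcal T}_{3}$ and the whole decomposition are pointwise/algebraic. Under the isomorphism $\Lambda^{3}_{7}(V^{*})\cong V^{*}$ sending $\beta\mapsto *_{7}(\beta\wedge\phi_{0})$ (whose inverse, up to the constant $-\tfrac14$ from (\ref{general}), is $\alpha\mapsto *_{7}(\alpha\wedge\phi_{0})$), a general element of $V^{*}\otimes\Lambda^{3}_{7}(V^{*})$ becomes a general element $T\in V^{*}\otimes V^{*}$, which splits as in (\ref{d1}) and (\ref{d2}) into its $\Lambda^{2}_{7}$, $\Lambda^{2}_{14}$, $S^{2}_{0}$ and trace parts. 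For the trace part, the distinguished generator is $T=\sum_{k}e^{k}\otimes e^{k}$, corresponding to $\nabla\phi=\sum_{k}e^{k}\otimes *_{7}(e^{k}\wedge\phi_{0})$, i.e. exactly the generator of $\mathbb{R}_{\phi}(M)$ described at the end of Section~\ref{g2-basic}. I would compute ${\mathcal T}_{3}$ on this generator and verify it vanishes; intuitively this must happen because a nearly parallel structure has $\nabla\phi$ proportional to $*_{\phi}\phi$, and such $\phi$ satisfies $d\phi=\lambda *_{\phi}\phi$, $\delta\phi=0$ with the constants matched precisely so that the conformal-Killing equation holds — the coefficients $\tfrac34,\tfrac14,-\tfrac15$ in (\ref{p-1}) are engineered for this.

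For injectivity on $\Lambda^{2}(M)\oplus S^{2}_{0}(M)$ I would pick one explicit nonzero representative in each of the three remaining irreducible summands and show ${\mathcal T}_{3}$ does not annihilate it; by Schur this forces injectivity on that whole summand, and since the three target images sit in distinct (or otherwise disjoint) isotypic pieces they cannot overlap, giving injectivity on the direct sum. Concretely: for $\Lambda^{2}_{7}$ I would use $T=e^{1}\wedge e^{2}$ type elements lying in $\Lambda^{2}_{7}(V^{*})$ (characterized by $2*_{7}\beta=\beta\wedge\phi_{0}$); for $\Lambda^{2}_{14}$ an element satisfying $*_{7}\beta=-\beta\wedge\phi_{0}$; for $S^{2}_{0}$ a traceless diagonal symmetric tensor such as $e^{1}\otimes e^{1}-e^{2}\otimes e^{2}$. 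In each case I translate back through (\ref{identificare}) to an honest element $\gamma\otimes\beta$ (or a sum of such) of $V^{*}\otimes\Lambda^{3}_{7}(V^{*})$, feed it into (\ref{p-1}), and read off a nonzero component.

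The main obstacle is the bookkeeping of the two Hodge-star-and-wedge identifications: every representative must be pushed through $\beta\mapsto *_{7}(\beta\wedge\phi_{0})$ and back, and the interior-product term $i_{\gamma}\beta$ in (\ref{p-1}) requires knowing $i_{\gamma}\bigl(*_{7}(\alpha\wedge\phi_{0})\bigr)$ explicitly. The cleanest way to avoid a morass of $e^{ijk}$ computations is to exploit equivariance maximally: I would verify nonvanishing on a single well-chosen vector in each irreducible module and invoke Schur's lemma, rather than computing ${\mathcal T}_{3}$ on a general element. The one genuinely delicate check is confirming that ${\mathcal T}_{3}$ vanishes \emph{exactly} on $\mathbb{R}_{\phi}(M)$ and not merely to leading order — here I expect to use the defining relations $2*_{7}\beta=\beta\wedge\phi_{0}$ for $\beta\in\Lambda^{2}_{7}$ together with the identity (\ref{general}), so that the three coefficients combine to zero. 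Once both parts are in hand, Proposition~\ref{prime-prop} follows, and feeding (\ref{prime}) into (\ref{equa}) yields Proposition~\ref{proof-g2}: $\phi$ is conformal-Killing precisely when $\nabla\phi$ lives in $\mathbb{R}_{\phi}(M)$, i.e. the structure is nearly parallel.
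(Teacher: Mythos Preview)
Your plan is correct and is essentially the same Schur--lemma strategy the paper uses: reduce to one check per irreducible summand of $T^{*}M\otimes\Lambda^{3}_{7}(M)$. The paper streamlines the execution in two ways worth noting. First, instead of choosing a separate test element in each of $\Lambda^{2}_{7}$, $\Lambda^{2}_{14}$, $S^{2}_{0}$, it takes a \emph{single} $\eta=e^{1}\otimes *_{\phi}(e^{2}\wedge\phi)$ and checks that the equivariant compositions $\mathrm{pr}_{W}\circ{\mathcal T}_{3}$ (projections on the \emph{target} side, via the map $\alpha\otimes\beta\mapsto\alpha\otimes *_{\phi}(\beta\wedge\phi)$ into $T^{*}M\otimes T^{*}M$) are nonzero for each $W$; by Schur this forces injectivity on each summand with one computation of ${\mathcal T}_{3}(\eta)$ followed by three easy projections. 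Second, for the vanishing on $\mathbb{R}_{\phi}(M)$ the paper avoids your direct computation on the generator: it observes that the trivial module occurs in $T^{*}M\otimes\Lambda^{3}(M)$ with multiplicity one, so ${\mathcal T}_{3}\vert_{\mathbb{R}_{\phi}}=0$ is equivalent to $\mathrm{pr}_{\mathbb{R}_{\phi}}\circ{\mathcal T}_{3}=0$, and the latter follows at once from the universal identity $3\gamma\wedge\beta+\sum_{k}e^{k}\wedge\gamma\wedge i_{e_{k}}\beta=0$ for $\beta\in\Lambda^{3}$. Your route reaches the same conclusion with a bit more explicit calculation.
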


We divide the proof of Proposition \ref{prime-prop} into two
Lemmas. First we need to introduce some notations. Define a map
\begin{equation}\label{extinsa}
T^{*}M\otimes \Lambda^{3}(M)\rightarrow T^{*}M\otimes T^{*}M,\quad
\alpha\otimes\beta \rightarrow \alpha\otimes
*_{\phi}(\beta\wedge\phi ).
\end{equation}
For any component $W$ in the decomposition (\ref{var1}) of
$T^{*}M\otimes T^{*}M$, let
$$
\mathrm{pr}_{W} :T^{*}M\otimes \Lambda^{3}(M)\rightarrow W
$$
be the composition of the map (\ref{extinsa}) with the projection
from $T^{*}M\otimes T^{*}M$ to $W$, according to the decomposition
(\ref{var1}). Let $\{ e^{1}, \cdots, e^{7}\}$ be a local
orthonormal positive oriented frame of $T^{*}M$, so that $\phi$ is
of the form (\ref{phi}), and define
$$
\eta := e^{1}\otimes *_{\phi}(e^{2}\wedge\phi ).
$$
From its very definition, $\eta$ is a section of $T^{*}M\otimes
\Lambda^{3}_{7}(M)$.

\begin{lem} With the notations above,
\begin{equation}\label{la-1}
(\mathrm{pr}_{\Lambda^{2}(M)}\circ{\mathcal T}_{3})(\eta ) =
-\frac{9}{10}(3e^{12} + e^{47} + e^{56})
\end{equation}
and
\begin{equation}\label{la-2}
(\mathrm{pr}_{S^{2}_{0}(M)}\circ{\mathcal T}_{3})(\eta )=
-\frac{7}{4} (e^{1}\otimes e^{2}+ e^{2}\otimes e^{1}).
\end{equation}
In particular $(\mathrm{pr}_{\Lambda^{2}_{7}(M)}\circ{\mathcal
T}_{3})(\eta )$, $(\mathrm{pr}_{\Lambda^{2}_{14}(M)}\circ{\mathcal
T}_{3})(\eta )$ and $(\mathrm{pr}_{S^{2}_{0}(M)}\circ{\mathcal
T}_{3})(\eta )$ are non-zero and ${\mathcal
T}_{3}\vert_{\Lambda^{2}(M)\oplus S^{2}_{0}(M)}$ is injective.
\end{lem}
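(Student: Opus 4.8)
The plan is to verify the Lemma by a direct computation in the standard oriented orthonormal frame in which $\phi$ has the form (\ref{phi}), and then to extract the injectivity statement from an equivariance (Schur) argument. First I would make $\eta$ fully explicit: applying $*_\phi$ to the $4$-form $e^2\wedge\phi$ is a short Hodge-star calculation and produces $\beta:=*_\phi(e^2\wedge\phi)$ as a sum of four monomial $3$-forms; I would then record all seven interior products $i_{e_k}\beta$, and in particular the single $2$-form $i_{e_1}\beta$. As a consistency check on signs and normalization at this stage I would confirm the general identity (\ref{general}), namely $*_\phi(\beta\wedge\phi)=-4e^2$, which simultaneously shows that under the map (\ref{extinsa}) the tensor $\eta$ corresponds to $-4\,e^1\otimes e^2$, a trace-free element of $T^*M\otimes T^*M$.

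Next I would insert $\gamma=e^1$ and the above $\beta$ into the defining formula (\ref{p-1}) and evaluate the $3$-form $\mathcal{T}_3(\eta)(e_k)$ for each $k=1,\dots,7$; the first term contributes only when $k=1$, the second feeds in $i_{e_k}\beta$, and the third uses only $i_{e_1}\beta$. Feeding each $\mathcal{T}_3(\eta)(e_k)$ through (\ref{extinsa})---that is, wedging with $\phi$ and applying $*_\phi$---lands it in $T^*M$, and assembling $\sum_{k}e^k\otimes *_\phi\bigl(\mathcal{T}_3(\eta)(e_k)\wedge\phi\bigr)$ gives an explicit element of $T^*M\otimes T^*M$. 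Splitting this tensor into antisymmetric and symmetric parts yields formulas (\ref{la-1}) and (\ref{la-2}) respectively; the trace part vanishes, in agreement with the fact that $\eta$ has no $\mathbb{R}_\phi(M)$-component.

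For the ``in particular'' assertion I would first split the $2$-form $3e^{12}+e^{47}+e^{56}$ of (\ref{la-1}) according to (\ref{d1}). Using $L\alpha:=*_\phi(\alpha\wedge\phi)$, which (since $*_\phi^2=\mathrm{id}$ on $2$-forms in dimension $7$) acts as $+2$ on $\Lambda^2_7(M)$ and as $-1$ on $\Lambda^2_{14}(M)$, the projections onto these summands are $\tfrac13(L+\mathrm{id})$ and $\tfrac13(2\,\mathrm{id}-L)$. A short computation of $L$ on $e^{12},e^{47},e^{56}$ gives $L(3e^{12}+e^{47}+e^{56})=-2(e^{12}+e^{47}+e^{56})$, whence the two projections are $\tfrac13(e^{12}-e^{47}-e^{56})$ and $\tfrac43(2e^{12}+e^{47}+e^{56})$, both non-zero, while (\ref{la-2}) is manifestly non-zero. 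Since the decomposition (\ref{var1}) is multiplicity-free, the $G_2$-equivariant composite of $\mathcal{T}_3$ with (\ref{extinsa}) acts as a scalar on each summand; because $\eta$ itself has non-zero projection onto each of $\Lambda^2_7(M)$, $\Lambda^2_{14}(M)$, $S^2_0(M)$ (again read off from $-4\,e^1\otimes e^2$ via the same $L$-computation), the non-vanishing of the three image-projections forces these three scalars to be non-zero, and hence $\mathcal{T}_3$ is injective on all of $\Lambda^2(M)\oplus S^2_0(M)$.

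The only real obstacle is the sign and coefficient bookkeeping in the Hodge-star and interior-product manipulations needed to obtain $\beta$, the seven $3$-forms $\mathcal{T}_3(\eta)(e_k)$, and their images under (\ref{extinsa}) correctly. I would keep this under control by repeatedly using (\ref{general}) and the eigenvalue description of $L$ as internal checks, and by exploiting that every form occurring is a small integer combination of the seven monomials in (\ref{phi}), so that each wedge and contraction meets only a handful of terms.
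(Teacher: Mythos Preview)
Your proposal is correct and follows essentially the same route as the paper: both expand $\mathcal{T}_3(\eta)$ from (\ref{p-1}), push the result through the map (\ref{extinsa}) into $T^*M\otimes T^*M$, read off the antisymmetric and symmetric trace-free parts to obtain (\ref{la-1}) and (\ref{la-2}), and then use the eigenvalue description of $\Lambda^2_7$ versus $\Lambda^2_{14}$ together with Schur's lemma to deduce injectivity. The only cosmetic difference is that the paper checks the $\Lambda^2_7/\Lambda^2_{14}$ split by separately computing $(\mathrm{pr}_{\Lambda^2(M)}\circ\mathcal{T}_3)(\eta)\wedge\phi$ and $*_\phi(\mathrm{pr}_{\Lambda^2(M)}\circ\mathcal{T}_3)(\eta)$, whereas you package the same data via the operator $L=*_\phi(\,\cdot\,\wedge\phi)$ and its spectral projectors $\tfrac13(L+\mathrm{id})$, $\tfrac13(2\,\mathrm{id}-L)$; your extra verification that $\eta$ itself has non-zero projection on each summand is not needed (non-vanishing of $\mathrm{pr}_W\circ\mathcal{T}_3$ on $\eta$ already forces the $W$-component of $\eta$ to be non-zero, by Schur), but it does no harm.
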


\begin{proof} From (\ref{p1}),
$$
{\mathcal T}_{3}(\eta ) =\frac{3}{4}e^{1}\otimes
*_{\phi}(e^{2}\wedge\phi ) +\frac{1}{4} e^{k}\otimes
 e^{1}\wedge *_{\phi}(e^{k}\wedge e^{2}\wedge\phi ) -\frac{1}{5} e^{k}\otimes
e^{k}\wedge *_{\phi}(e^{1}\wedge e^{2}\wedge \phi )
$$
where, in order to simplify notations, we omitted the summation
sign over $1\leq k\leq 7$. Therefore, from the identification
(\ref{identificare}),
\begin{align*}
(\mathrm{pr}_{\Lambda^{2}(M)}\circ{\mathcal T}_{3})(\eta )&=
\frac{3}{4} e^{1}\wedge *_{\phi}(*_{\phi}(e^{2}\wedge \phi )
\wedge \phi )+\frac{1}{4} e^{k}\wedge *_{\phi} (e^{1}\wedge
*_{\phi}(e^{k}\wedge e^{2}\wedge\phi ) \wedge\phi )\\
&-\frac{1}{5} e^{k}\wedge *_{\phi} (e^{k}\wedge *_{\phi}
(e^{1}\wedge e^{2}\wedge\phi )\wedge \phi ).
\end{align*}
The first term from the right hand side of this equality is equal
to $-3e^{1}\wedge e^{2}$, using the identity (\ref{general}). For
the second term, we compute
$$
*_{\phi}(e^{2}\wedge \phi )= - (e^{367} + e^{345} + e^{156}
+e^{147})
$$
and, for any $k\in \{ 1,2,\cdots , 7\}$,
\begin{align*}
*_{\phi}( e^{k}\wedge e^{2}\wedge \phi ) &= -
\delta_{k1}(e^{47} + e^{56}) - \delta_{k3} (e^{45} + e^{67})\\
&+\delta_{k4} (e^{17} + e^{35}) + \delta_{k5} ( e^{16}- e^{34})\\
&+\delta_{k6} (e^{37}-e^{15}) - \delta_{k7} (e^{14}+ e^{36}).
\end{align*}
It follows that
$$
e^{1}\wedge \phi \wedge *_{\phi} (e^{k}\wedge e^{2}\wedge\phi )=
2\delta_{k1} e^{134567} -\delta_{k4}e^{123456} +\delta_{k5}
e^{123457} +\delta_{k6} e^{123467} -\delta_{k7} e^{123567}
$$
and
$$
*_{\phi}(e^{1}\wedge \phi \wedge *_{\phi} (e^{k}\wedge
e^{2}\wedge\phi ))=- 2\delta_{k1} e^{2} -\delta_{k4}e^{7}
-\delta_{k5} e^{6} +\delta_{k6} e^{5} +\delta_{k7} e^{4}.
$$
Therefore,
$$
e^{k}\wedge *_{\phi}(e^{1}\wedge \phi \wedge *_{\phi} (e^{k}\wedge
e^{2}\wedge\phi ))= -2(e^{12} +e^{47} +e^{56}),
$$
which provides the second term in the expression of
$(\mathrm{pr}_{\Lambda^{2}(M)}\circ{\mathcal T}_{3})(\eta ).$ A
similar calculation finally shows that
$$
e^{k}\wedge *_{\phi} ( e^{k}\wedge *_{\phi}(e^{1}\wedge
e^{2}\wedge \phi ) \wedge \phi )= 2(- 2e^{12}+e^{47} + e^{56}).
$$
Putting together the results of these computations, we obtain
(\ref{la-1}). Relation (\ref{la-2}) follows from a similar
computation. Using (\ref{la-1}) it can be checked that
\begin{equation}\label{rel1}
(\mathrm{pr}_{\Lambda^{2}(M)}\circ{\mathcal T}_{3})(\eta )\wedge
\phi = \frac{9}{5}( e^{12347} + e^{12356} + e^{34567})
\end{equation}
and
\begin{equation}\label{rel2}
*_{\phi}(\mathrm{pr}_{\Lambda^{2}(M)}\circ {\mathcal T}_{3})(\eta
) = -\frac{9}{10}( e^{12347} + e^{12356} + 3e^{34567}).
\end{equation}
Relations (\ref{rel1}) and (\ref{rel2}) imply that
$(\mathrm{pr}_{\Lambda^{2}_{7}(M)}\circ{\mathcal T}_{3})(\eta )$
and $(\mathrm{pr}_{\Lambda^{2}_{14}(M)}\circ{\mathcal T}_{3})(\eta
)$ are both non-zero. From the decomposition (\ref{prime}) and an
easy argument which uses the Schur's lemma,
$\mathrm{pr}_{\Lambda^{2}_{7}(M)}\circ{\mathcal T}_{3}$,
$\mathrm{pr}_{\Lambda^{2}_{14}(M)}\circ{\mathcal T}_{3}$ and
$\mathrm{pr}_{S^{2}_{0}(M)}\circ{\mathcal T}_{3}$ non-trivial
imply that ${\mathcal T}\vert_{\Lambda^{2}(M)\oplus S^{2}_{0}(M)}$
is injective, as required.
\end{proof}

In order to conclude the proof of Proposition \ref{prime-prop} we
still need to show that $\mathcal T$ is identically zero on
$\mathbb{R}_{\phi}(M)$. This is done in the following Lemma.

\begin{lem} The restriction ${\mathcal T}_{3}\vert_{\mathbb{R}_{\phi}(M)}$
is identically zero.
\end{lem}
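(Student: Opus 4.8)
The plan is to reduce the statement to a single algebraic computation on the explicit generator of $\mathbb{R}_{\phi}(M)$. As recalled in Subsection~\ref{g2-basic}, viewed inside $T^{*}M\otimes\Lambda^{3}_{7}(M)$ the line bundle $\mathbb{R}_{\phi}(M)$ is spanned by $\Phi:=\sum_{k=1}^{7}e^{k}\otimes *_{\phi}(e^{k}\wedge\phi)$, so it suffices to prove that $\mathcal{T}_{3}(\Phi)(X)=0$ for every $X$. (Since everything in sight is $G_{2}$-covariant and $G_{2}$ acts transitively on unit vectors, one could even reduce to a single $X=e_{1}$, but the argument below does not need this.)

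The key tool I would use is the Hodge--contraction identity $*_{\phi}(\alpha\wedge\phi)=-\,i_{\alpha^{\sharp}}(*_{\phi}\phi)$ for a $1$-form $\alpha$, which is the general relation $*(\alpha\wedge\omega)=(-1)^{\deg\omega}i_{\alpha^{\sharp}}(*\omega)$ specialised to $\omega=\phi$; it can be read off directly from the component expansions (\ref{phi})--(\ref{starphi}) (indeed $*_{\phi}(e^{2}\wedge\phi)$ computed in the previous Lemma equals $-\,i_{e_{2}}(*_{\phi}\phi)$). Writing $\psi:=*_{\phi}\phi$ and $\beta_{k}:=*_{\phi}(e^{k}\wedge\phi)=-\,i_{e_{k}}\psi$, I would then expand $\mathcal{T}_{3}(\Phi)(X)=\sum_{k}\mathcal{T}_{3}(e^{k}\otimes\beta_{k})(X)$ termwise according to (\ref{p-1}).

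The three resulting sums telescope using only elementary properties of the interior product. The last term carries $\sum_{k}i_{e_{k}}\beta_{k}=-\sum_{k}i_{e_{k}}i_{e_{k}}\psi=0$, since $i_{e_{k}}i_{e_{k}}=0$, so it drops out entirely. In the first term $\sum_{k}e^{k}(X)\beta_{k}=-\,i_{X}\psi$, contributing $-\tfrac{3}{4}i_{X}\psi$. In the second term I would use the anticommutation $i_{X}i_{e_{k}}=-\,i_{e_{k}}i_{X}$ together with the ``number operator'' identity $\sum_{k}e^{k}\wedge i_{e_{k}}\eta=(\deg\eta)\,\eta$ applied to the $3$-form $\eta=i_{X}\psi$, which turns $\tfrac{1}{4}\sum_{k}e^{k}\wedge i_{X}\beta_{k}$ into $+\tfrac{3}{4}i_{X}\psi$. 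Adding the three contributions gives $\mathcal{T}_{3}(\Phi)(X)=-\tfrac{3}{4}i_{X}\psi+\tfrac{3}{4}i_{X}\psi=0$, as required.

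The only real obstacle is locating the correct identity $*_{\phi}(\alpha\wedge\phi)=-\,i_{\alpha^{\sharp}}\psi$ and fixing its sign and normalisation; once this is in hand the computation is purely formal and needs no $G_{2}$-specific component juggling (in contrast to the preceding Lemma). A more pedestrian alternative, bypassing the identity, would be to evaluate $\mathcal{T}_{3}(\Phi)(e_{1})$ by brute force in the explicit frame as in the previous proof, but this is longer and less transparent.
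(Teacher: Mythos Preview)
Your argument is correct: with $\beta_{k}=-\,i_{e_{k}}\psi$ the three sums in $\mathcal{T}_{3}(\Phi)(X)$ evaluate exactly as you say, and the Hodge--contraction identity $*_{\phi}(\alpha\wedge\phi)=-\,i_{\alpha^{\sharp}}\psi$ you invoke is the standard one (and is indeed confirmed by the explicit $*_{\phi}(e^{2}\wedge\phi)$ computed just above).

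The paper proceeds differently. Rather than evaluating $\mathcal{T}_{3}$ on the explicit generator $\Phi$, it invokes Schur's lemma once more: since $\mathbb{R}_{\phi}(M)$ occurs in $T^{*}M\otimes\Lambda^{3}(M)$ with multiplicity one, $\mathcal{T}_{3}\vert_{\mathbb{R}_{\phi}(M)}=0$ is equivalent to $\mathrm{pr}_{\mathbb{R}_{\phi}(M)}\circ\mathcal{T}_{3}=0$ on the whole of $T^{*}M\otimes\Lambda^{3}_{7}(M)$. The latter is then checked on a generic decomposable $\gamma\otimes\beta$ using the projection formula $\gamma\otimes\beta\mapsto *_{\phi}(\gamma\wedge\beta\wedge\phi)\,g_{\phi}$; the $\tfrac{1}{5}$-term dies because $e^{k}\wedge e^{k}=0$, and the remaining two terms cancel by the same ``number operator'' identity $\sum_{k}e^{k}\wedge i_{e_{k}}\beta=3\beta$ that you use. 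Your route is slightly more self-contained (no Schur, no multiplicity count) at the cost of importing the Hodge--contraction relation; the paper's route stays within the Schur-lemma framework already set up for the preceding lemma and never needs to identify $\beta_{k}$ with $-\,i_{e_{k}}\psi$.
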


\begin{proof} Recall that $T^{*}M\otimes \Lambda^{3}(M)$
contains $\mathbb{R}_{\phi }(M)$ with multiplicity one and the
projection $\mathrm{pr}_{\mathbb{R}_{\phi}(M)}$ onto
$\mathbb{R}_{\phi}(M)$ has the following expression:
\begin{equation}\label{p1}
T^{*}M\otimes\Lambda^{3}(M)\ni\gamma\otimes\beta \rightarrow
*_{\phi}(\gamma\wedge \beta\wedge \phi
)g_{\phi}\in\mathbb{R}_{\phi}(M).
\end{equation}
Schur's Lemma again implies that ${\mathcal
T}_{3}\vert_{\mathbb{R}_{\phi}(M)}$ is identically zero if and
only if $\mathrm{pr}_{\mathbb{R}_{\phi}(M)}\circ{\mathcal T}_{3}$
is identically zero. On the other hand, from definition
(\ref{p-1}) of ${\mathcal T}_{3}$ and (\ref{p1}),
\begin{align*}
(\mathrm{pr}_{\mathbb{R}_{\phi}(M)}\circ {\mathcal T}_{3})(\gamma
\otimes\beta )&= \mathrm{pr}_{\mathbb{R}_{\phi}(M)}\left(
e^{k}\otimes {\mathcal T}_{3}(\gamma\otimes\beta )(e_{k})\right)\\
&= \mathrm{pr}_{\mathbb{R}_{\phi}(M)} \left( \frac{3}{4}
\gamma\otimes\beta +\frac{1}{4} e^{k}\otimes \gamma\wedge
i_{e_{k}}\beta -\frac{1}{5} e^{k}\otimes e^{k}\wedge
i_{\gamma}\beta \right)\\
&= *_{\phi}\left(\left( \frac{3}{4}\gamma\wedge \beta +\frac{1}{4}
e^{k}\wedge \gamma\wedge i_{e_{k}}\beta \right)\wedge\phi\right) g_{\phi}\\
\end{align*}
which is zero, since
$$
3\gamma\wedge\beta +e^{k}\wedge \gamma\wedge i_{e_{k}}\beta
=0,\quad\forall\gamma\in T^{*}M,\quad\forall\beta\in
\Lambda^{3}(M).
$$
Our claim follows.
\end{proof}

The proof is of Proposition \ref{prime-prop} is now completed.
Thus Proposition \ref{proof-g2} and Theorem \ref{main} {\it ii)}
follow.

\section{The statement for $\mathrm{Spin}_{7}$}\label{spin7}

\subsection{Basic facts about
$\mathrm{Spin}_{7}$-structures}\label{spin7-basic}

Extend the vector space $V =\mathbb{R}^{7}$ of Section
\ref{g2-basic} to $V_{+}: = \mathbb{R}e_{0}\oplus V$. The group
$\mathrm{Spin}_{7}< GL(V_{+})$ is a $21$-dimensional compact,
connected, simply connected Lie group defined as the stabilizer of
the $4$-form
\begin{equation}\label{psi0}
\psi_{0} = e^{0}\wedge\phi_{0} + *_{7}\phi_{0} ,
\end{equation}
where we used the isomorphism $\Lambda^{4}(V_{+}^{*}) \cong
\Lambda^{3}(V^{*})\oplus \Lambda^{4}(V^{*})$, $\phi_{0}$ and
$*_{7}$ were defined in Section \ref{g2-basic} and $e^{0}\in
(V_{+})^{*}$ takes value one on $e_{0}$ and annihilates $V.$ In
terms of the standard basis $\{ e_{0}, \cdots , e_{7}\}$ of
$V_{+}$ and the dual basis $\{ e^{0}, \cdots , e^{7}\}$,
\begin{align*}
\psi_{0} =e^{0123}+e^{0145}+e^{0167}+e^{0246}-e^{0257}-e^{0347}-e^{0356}\\
+e^{4567} +e^{2367}+e^{2345}+e^{1357}-e^{1346}-e^{1256}-e^{1247}.
\end{align*}
From (\ref{psi0}), the stabilizer of $e_{0}$ in
$\mathrm{Spin}_{7}$ is isomorphic to $G_{2}.$ The vector space
$V_{+}$ has a standard metric $\langle\cdot , \cdot\rangle_{8}$
and orientation, for which $\{ e^{0}, \cdots , e^{7}\}$ is
orthonormal and positive oriented and we shall denote by $*_{8}$
the associated Hodge star operator. It can be shown that
$\mathrm{Spin}_{7}$ preserves this metric and orientation and the
map $\mathrm{Spin}_{7}\rightarrow S^{7}$ defined by $g\rightarrow
g(e_{0})$ is a $G_{2}$-fibration.

The group $\mathrm{Spin}_{7}$ acts irreducibly on $V_{+}\cong
V_{+}^{*}$, but its action on higher degree forms is reducible in
general. For our purpose we need to recall the irreducible
decomposition of $\Lambda^{3}(V_{+}^{*})$ only. As shown in
\cite{bryant1}, $\Lambda^{3}(V_{+}^{*})$ decomposes into
irreducible $\mathrm{Spin}_{7}$-modules as
\begin{equation}\label{lambda3}
\Lambda^{3}(V_{+}^{*})=\Lambda^{3}_{8}(V_{+}^{*}) \oplus
\Lambda^{3}_{48}(V_{+}^{*}),
\end{equation}
where $\Lambda^{3}_{8}(V_{+}^{*})$ and
$\Lambda^{3}_{48}(V_{+}^{*})$, of dimension $8$ and $48$
respectively, are defined by
\begin{align*}
\Lambda^{3}_{8}(V_{+}^{*}) &:= \{ *_{8} (\psi_{0}\wedge \alpha ),
\quad\alpha\in V_{+}^{*}\}\\
\Lambda^{3}_{48}(V_{+}^{*}) &:= \{ \beta\in\Lambda^{3}(V_{+}^{*}),
\quad \beta\wedge \psi_{0} =0\} .
\end{align*}

Consider now a $\mathrm{Spin}_{7}$-structure on an $8$-manifold
$M$, defined by a smooth $4$-form $\psi\in \Omega^{4}_{+}(M)$
linearly equivalent to $\psi_{0}$ at any point of $M$. Since
$\mathrm{Spin}_{7}< SO(V_{+})$, $\psi$ determines a canonical
metric $g_{\psi}$ and an orientation on $M$, for which any linear
isomorphism $f_{p}: T_{p}M\rightarrow V_{+}$ with
$f_{p}^{*}(\psi_{0})= \psi_{p}$ is an orientation preserving
isometry. We denote by $*_{\psi}$ the Hodge star operator
associated to $g_{\psi}$ and this orientation. We shall freely
identify vectors and covectors on $M$ using $g_{\psi }.$ Let
$\nabla$ be the Levi-Civita connection of $g_{\psi}.$  As shown in
\cite{sal} and \cite{gray} (see also \cite{bryant2}) the covariant
derivative $\nabla\psi$ is a section of the tensor product
$T^{*}M\otimes \Lambda^{4}_{7}(M)$, where $\Lambda^{4}_{7}(M)$ is
a rank $7$-bundle generated by $4$-forms $X\wedge i_{Y}\psi -
Y\wedge i_{X}\psi$, for any $X, Y\in TM$. Moreover,  the inner
contraction map
\begin{equation}\label{inner}
T^{*}M\otimes \Lambda^{4}_{7}(M)\rightarrow \Lambda^{3}(M),
X\otimes \alpha \rightarrow i_{X}\alpha
\end{equation}
is an isomorphism \cite{fernandez}. From (\ref{lambda3}) and
(\ref{inner}), the isomorphic bundles $T^{*}M\otimes
\Lambda^{4}_{7}(M)$ and $\Lambda^{3}(M)$ decompose into
irreducible sub-bundles as
\begin{equation}\label{ire}
T^{*}M\otimes \Lambda^{4}_{7}(M) \cong \Lambda^{3}(M)\cong
\Lambda^{3}_{8}(M)\oplus \Lambda^{3}_{48}(M),
\end{equation}
where
$$
\Lambda^{3}_{8}(M) := \{ *_{\psi} (\psi \wedge \alpha ), \quad
\alpha\in T^{*}M\}
$$
and
$$
\Lambda^{3}_{48}(M) := \{ \beta\in\Lambda^{3}(M), \quad
\beta\wedge \psi =0\} .
$$
It follows that there are four classes of
$\mathrm{Spin}_{7}$-manifolds \cite{fernandez}. The
$\mathrm{Spin}_{7}$-structure is called {\it parallel} if
$\nabla\psi =0.$

\subsection{Proof for the $\mathrm{Spin}_{7}$ statement}

In this Section we prove Theorem \ref{main} {\it ii)}. We use a
similar method like for Theorem \ref{main} {\it i)}, but the
computations are more involved. Let $M$ be an $8$-manifold with a
$\mathrm{Spin}_{7}$-structure defined by
$\psi\in\Omega^{3}_{+}(M).$ With the notations from the previous
Section, we aim to prove the following Proposition.

\begin{prop}\label{p-spin} The $4$-form $\psi$ is conformal-Killing
with respect to $g_{\psi}$ if and only if the
$\mathrm{Spin}_{7}$-structure defined by $\psi$ is parallel.
\end{prop}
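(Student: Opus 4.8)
The plan is to mirror exactly the strategy used for the $G_{2}$ case in Proposition \ref{prime-prop}, now in dimension $8$. First I would introduce the algebraic conformal-Killing operator
$$
{\mathcal T}_{4}: T^{*}M\otimes \Lambda^{4}_{7}(M)\rightarrow T^{*}M\otimes \Lambda^{4}(M),
$$
defined on decomposable tensors $\gamma\otimes\beta$ (with $\gamma\in T^{*}M$, $\beta\in\Lambda^{4}_{7}(M)$) by the general formula for conformal-Killing operators specialised to $p=4$, $m=8$:
$$
{\mathcal T}_{4}(\gamma\otimes\beta)(X)=\frac{4}{5}\gamma(X)\beta+\frac{1}{5}\gamma\wedge i_{X}\beta-\frac{1}{5}X\wedge i_{\gamma}\beta.
$$
As in the $G_{2}$ case, the key observation is that ${\mathcal T}_{4}(\nabla\psi)(X)=\nabla_{X}\psi-\frac{1}{5}i_{X}d\psi+\frac{1}{5}X\wedge\delta\psi$, so that $\psi$ is conformal-Killing if and only if ${\mathcal T}_{4}(\nabla\psi)=0$. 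Since $\nabla\psi$ is a section of $T^{*}M\otimes\Lambda^{4}_{7}(M)$, the whole statement reduces to understanding the restriction of ${\mathcal T}_{4}$ to this bundle, which decomposes irreducibly as $\Lambda^{3}_{8}(M)\oplus\Lambda^{3}_{48}(M)$ by (\ref{ire}).

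The crucial structural difference from the $G_{2}$ case is that \emph{there is no $\mathbb{R}_{\psi}(M)$ summand} in the decomposition (\ref{ire}): the bundle $T^{*}M\otimes\Lambda^{4}_{7}(M)\cong\Lambda^{3}_{8}(M)\oplus\Lambda^{3}_{48}(M)$ contains no trivial subrepresentation. This is precisely why the $\mathrm{Spin}_{7}$ conclusion is ``parallel'' rather than merely ``nearly parallel.'' Accordingly, the analogue of Proposition \ref{prime-prop} I would aim to prove is that ${\mathcal T}_{4}$ is \emph{injective on all of} $T^{*}M\otimes\Lambda^{4}_{7}(M)$. Once this is established, ${\mathcal T}_{4}(\nabla\psi)=0$ forces $\nabla\psi=0$, which is exactly the condition that the $\mathrm{Spin}_{7}$-structure be parallel; conversely parallelism trivially gives a conformal-Killing form.

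To prove injectivity I would use the same representation-theoretic argument as in the $G_{2}$ Lemmas, invoking Schur's lemma: since both $\Lambda^{3}_{8}(M)$ and $\Lambda^{3}_{48}(M)$ are irreducible and appear with multiplicity one, it suffices to exhibit, for each, a single nonzero test tensor whose image under ${\mathcal T}_{4}$ (composed with the appropriate projection back to that summand) is nonzero. Concretely, in the standard basis $\{e^{0},\dots,e^{7}\}$ with $\psi$ of the form (\ref{psi0}), I would take a natural element such as $\eta:=e^{0}\otimes i_{e_{0}}\psi\in T^{*}M\otimes\Lambda^{4}_{7}(M)$ (or a comparable decomposable tensor adapted to each summand), apply ${\mathcal T}_{4}$ explicitly, use the isomorphisms (\ref{inner}) and $*_{\psi}(\psi\wedge\alpha)$ together with the wedge-product and Hodge-star identities for $\psi_{0}$ and $*_{8}\psi_{0}$, and verify that both the $\Lambda^{3}_{8}$- and $\Lambda^{3}_{48}$-components of the result are nonzero. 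The main obstacle I anticipate is the sheer computational bulk: dimension $8$ means the Hodge star acts on $4$-forms with many more nonzero terms than in the $7$-dimensional setting, so the explicit evaluation of expressions like $*_{\psi}(\gamma\wedge i_{X}\psi\wedge\psi)$ and the extraction of the two irreducible components will be considerably more laborious than the $G_{2}$ computations, even though the conceptual skeleton is identical.
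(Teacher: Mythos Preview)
Your strategy is exactly the paper's: introduce $\mathcal{T}_{4}$, reduce the statement to the injectivity of $\mathcal{T}_{4}$ on $T^{*}M\otimes\Lambda^{4}_{7}(M)\cong\Lambda^{3}_{8}(M)\oplus\Lambda^{3}_{48}(M)$, and verify injectivity on each irreducible summand by a Schur's-lemma argument applied to a single test tensor. Two remarks are in order.

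First, your proposed test element $\eta=e^{0}\otimes i_{e_{0}}\psi$ does not lie in $T^{*}M\otimes\Lambda^{4}_{7}(M)$: the form $i_{e_{0}}\psi$ is a $3$-form, not a $4$-form. Recall that $\Lambda^{4}_{7}(M)$ is generated by $X\wedge i_{Y}\psi-Y\wedge i_{X}\psi$; the paper accordingly takes $\alpha_{0}=i_{e_{0}}\psi\wedge e^{1}-i_{e_{1}}\psi\wedge e^{0}$ and $\eta=e^{0}\otimes\alpha_{0}$. Your parenthetical ``or a comparable decomposable tensor'' shows you are aware the choice is flexible, but the specific element you wrote down must be corrected before any computation can begin.

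Second, the projection maps differ in implementation. You propose to reach $\Lambda^{3}_{8}$ and $\Lambda^{3}_{48}$ via the inner-contraction isomorphism (\ref{inner}) and the operator $\alpha\mapsto *_{\psi}(\psi\wedge\alpha)$. The paper instead builds an auxiliary map $p:\Lambda^{4}(M)\to\Lambda^{2}(M)$, $p(\beta)(X,Y)=\langle\psi, i_{X}\beta\wedge Y-i_{Y}\beta\wedge X\rangle$, tensors with the identity on $T^{*}M$, skew-symmetrises to land in $\Lambda^{3}(M)$, and only then splits into $\Lambda^{3}_{8}\oplus\Lambda^{3}_{48}$. Both routes are legitimate $\mathrm{Spin}_{7}$-equivariant projections and either suffices for the Schur argument; the paper's choice trades Hodge-star manipulations for inner-product computations, which in practice organises the $8$-dimensional bookkeeping somewhat differently but is not conceptually distinct from what you outline.
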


Like before, consider the algebraic conformal-Killing operator
\begin{equation}\label{o1}
{\mathcal T}_{4} :T^{*}M\otimes \Lambda^{4}_{7}(M) \rightarrow
T^{*}M\otimes \Lambda^{4}(M)
\end{equation}
given on decomposable tensors by
\begin{equation}\label{o2}
{\mathcal T}_{4}(\gamma\otimes \alpha ) (X)= \frac{4}{5} \gamma
(X)\alpha +\frac{1}{5} \gamma \wedge i_{X}\alpha -\frac{1}{5}
X\wedge i_{\gamma}\alpha
\end{equation}
where $\alpha\in \Lambda^{4}_{7}(M)$, $X\in TM$ is identified with
a covector using $g_{\psi}$ and $i_{\gamma}\alpha := \alpha
(\gamma^{\flat},\cdot )$ denotes the inner product of $\alpha$ and
the dual vector field $\gamma^{\flat}$ corresponding to $\gamma
\in T^{*}M$. (${\mathcal T}_{4}$ usually acts on the entire
$T^{*}M\otimes \Lambda^{4}(M)$ but we consider its restriction to
$T^{*}M\otimes \Lambda^{4}_{7}(M)$ only, because the covariant
derivative $\nabla\psi$ with respect to the Levi-Civita connection
$\nabla$ of $g_{\psi}$ is a section of this bundle). Note that
$$
{\mathcal T}_{4}(\nabla \psi )(X) =\nabla_{X}\psi -
\frac{1}{5}i_{X}d\psi +\frac{1}{5} X\wedge \delta \psi
,\quad\forall X\in TM
$$
and thus $\psi$ is conformal-Killing if and only if
\begin{equation}\label{spin-conf}
{\mathcal T}_{4}(\nabla \psi )=0.
\end{equation}

From (\ref{spin-conf}), Proposition \ref{p-spin} is a consequence
of the following general result.

\begin{prop}\label{a-spin} The algebraic conformal-Killing operator
${\mathcal T}_{4}$ defined by (\ref{o1}) and (\ref{o2}) is
injective.\end{prop}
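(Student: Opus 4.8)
The plan is to mirror the $G_2$ argument of the previous section, exploiting the fact that $\Lambda^3(M)\cong T^*M\otimes\Lambda^4_7(M)$ decomposes under $\mathrm{Spin}_7$ into only two irreducible summands, $\Lambda^3_8(M)\oplus\Lambda^3_{48}(M)$, as recorded in (\ref{ire}). Since ${\mathcal T}_4$ is an $\mathrm{Spin}_7$-equivariant map (it is built from the invariant tensors $g_\psi$ and $\psi$, together with universal algebraic operations), Schur's Lemma tells us that on each irreducible summand ${\mathcal T}_4$ acts either as an isomorphism onto a copy of that summand in the target, or as zero. Thus to establish injectivity it suffices to verify that the restriction of ${\mathcal T}_4$ to each of $\Lambda^3_8(M)$ and $\Lambda^3_{48}(M)$ is non-zero. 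Injectivity on the whole of $T^*M\otimes\Lambda^4_7(M)$ then follows, exactly as in the final paragraph of the $G_2$ Lemma.

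First I would fix the standard model: take the orthonormal frame $\{e^0,\dots,e^7\}$ with $\psi=\psi_0$ as in (\ref{psi0}), so that all computations can be carried out at a single point using the explicit expansion of $\psi_0$. To detect the summand $\Lambda^3_{48}(M)$, I would compute $({\rm pr}_{\Lambda^3_{48}}\circ{\mathcal T}_4)$ on a conveniently chosen decomposable test tensor $\eta\in T^*M\otimes\Lambda^4_7(M)$ — for instance $\eta=e^0\otimes(e^1\wedge i_{e_2}\psi_0-e^2\wedge i_{e_1}\psi_0)$, analogous to the element $\eta$ used in the $G_2$ case — and exhibit that the resulting $3$-form is non-zero. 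The defining condition $\beta\wedge\psi_0=0$ for $\Lambda^3_{48}$ gives a clean way to project: I would compute ${\mathcal T}_4(\eta)$, pass to the $\Lambda^3(M)$-picture via the inner-contraction isomorphism (\ref{inner}), and then check that $\beta\wedge\psi_0$ does not exhaust the whole of ${\mathcal T}_4(\eta)$, so the $\Lambda^3_{48}$-component survives.

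To detect the remaining summand $\Lambda^3_8(M)$, which is isomorphic to the standard representation $V_+\cong V_+^*$, I would use the projection onto $\Lambda^3_8$ induced by the map $\alpha\mapsto *_\psi(\psi\wedge\alpha)$; equivalently, I would contract ${\mathcal T}_4(\eta)$ against $\psi$ to extract a $1$-form and verify it is non-zero. Concretely, one needs an identity of the type $*_\psi(*_\psi(\psi\wedge\alpha)\wedge\psi)=c\,\alpha$ (the $\mathrm{Spin}_7$-analogue of (\ref{general})) to pin down the $\Lambda^3_8$-component, and then evaluate on the same $\eta$. Showing both projections are non-trivial closes the Schur's-Lemma argument.

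The main obstacle I anticipate is purely computational rather than conceptual: the explicit wedge and Hodge-star manipulations in eight dimensions with the fourteen-term $\psi_0$ are considerably heavier than the $G_2$ case (as the authors themselves flag), so keeping track of signs when applying $*_8$ to high-degree forms and when expanding $i_{e_k}\psi_0$ is where errors are likeliest. A useful sanity check throughout will be $\mathrm{Spin}_7$-equivariance: any genuine component must transform correctly under the residual $G_2$ stabilizer of $e_0$, which constrains the admissible answers and lets me cross-check the brute-force computation against the known branching $\Lambda^3_8\!\downarrow_{G_2}=V\oplus\mathbb{R}$ and $\Lambda^3_{48}\!\downarrow_{G_2}$.
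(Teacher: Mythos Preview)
Your overall strategy is correct and matches the paper's: use Schur's Lemma on the two irreducible summands $\Lambda^3_8(M)$ and $\Lambda^3_{48}(M)$, pick a test element $\eta\in T^*M\otimes\Lambda^4_7(M)$, and exhibit equivariant maps from $T^*M\otimes\Lambda^4(M)$ to copies of these irreducibles that do not annihilate ${\mathcal T}_4(\eta)$. The paper does exactly this, with the slightly different test element $\eta=e^0\otimes(i_{e_0}\psi\wedge e^1-i_{e_1}\psi\wedge e^0)$.

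There is, however, a genuine gap at the key step. You propose to ``pass to the $\Lambda^3(M)$-picture via the inner-contraction isomorphism (\ref{inner}).'' But the image of ${\mathcal T}_4$ lies, by construction, in the twistor bundle ${\mathcal T}^4(M)$ of (\ref{dec-tangent}), which is \emph{defined} as the intersection of the kernels of skew-symmetrization and of the natural contraction $X\otimes\alpha\mapsto i_X\alpha$. Hence the inner contraction vanishes identically on ${\mathcal T}_4(\eta)$ for every $\eta$, and your $\beta\in\Lambda^3(M)$ is zero; nothing is detected. One can also verify this directly from (\ref{o2}): the inner contraction of ${\mathcal T}_4(\gamma\otimes\alpha)$ evaluates to $\tfrac{4}{5}i_\gamma\alpha+\tfrac{1}{5}i_\gamma\alpha-\tfrac{1}{5}\cdot 5\,i_\gamma\alpha=0$. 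This is precisely why the paper is forced to build a less obvious equivariant map $P:T^*M\otimes\Lambda^4(M)\to\Lambda^3(M)$, obtained from an auxiliary $p:\Lambda^4(M)\to\Lambda^2(M)$, $p(\beta)(X,Y)=\langle\psi,\,i_X\beta\wedge Y-i_Y\beta\wedge X\rangle$, followed by $\mathrm{id}\otimes p$ and skew-symmetrization; this $P$ does not factor through the natural contraction and so is not automatically zero on ${\mathcal T}^4(M)$. The explicit computation of $(P\circ{\mathcal T}_4)(\eta)$ and the check that both its $\Lambda^3_8$- and $\Lambda^3_{48}$-components survive are the content of the paper's Lemmas. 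Your alternative idea for the $\Lambda^3_8$-piece (contracting ${\mathcal T}_4(\eta)$ against $\psi$ to extract a $1$-form) is not killed by this obstruction and could work for that summand, but you still need a replacement detection map for $\Lambda^3_{48}$; the inner contraction will not supply one.
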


In order to prove Proposition \ref{a-spin}, we will find maps
$$
P_{8}: T^{*}M\otimes \Lambda^{4}(M) \rightarrow \Lambda^{3}_{8}(M)
$$
and
$$
P_{48}: T^{*}M\otimes \Lambda^{4}(M) \rightarrow
\Lambda^{3}_{48}(M)
$$
such that the compositions $P_{8}\circ {\mathcal T}_{4}$ and
$P_{48}\circ {\mathcal T}_{4}$ are non-trivial. (The existence of
such maps would readily imply, from Schur's lemma and the
decomposition (\ref{ire}) of $T^{*}M\otimes \Lambda^{4}_{7}(M)$,
that ${\mathcal T}_{4}$ is injective, as required in Proposition
\ref{a-spin}).\\

In order to define the maps $P_{8}$ and $P_{48}$ we need to
introduce more notations, as follows. First, define
$$
p:\Lambda^{4}(M) \rightarrow \Lambda^{2}(M)
$$
by
\begin{equation}\label{def-p}
p(\beta ) (X, Y) = \langle \psi , i_{X}\beta\wedge Y - i_{Y}\beta
\wedge X\rangle ,\quad \beta\in\Lambda^{4}(M),\quad X ,Y\in TM,
\end{equation}
where $\langle\cdot , \cdot\rangle$ denotes the inner product
which on decomposable skew-symmetric multi-vectors (or forms) is
defined by
$$
\langle X_{1}\wedge \cdots X_{p}, Y_{1}\wedge \cdots Y_{p}\rangle
=\mathrm{det}g_{\psi}(X_{i}, Y_{j}).
$$
Next, by tensoring the map $p$ with the identity map on $T^{*}M$
we get a map
$$
\mathrm{id}\otimes p:T^{*}M\otimes \Lambda^{4}(M)\rightarrow
T^{*}M\otimes \Lambda^{2}(M),
$$
which, composed at the right with skew symmetrization, gives a map
$$
P: T^{*}M\otimes \Lambda^{4}(M) \rightarrow \Lambda^{3}(M).
$$
Finally, composing further $P$ with the projections
$\Lambda^{3}(M)\rightarrow \Lambda^{3}_{8}(M)$ and
$\Lambda^{3}(M)\rightarrow \Lambda^{3}_{48}(M)$ according to
(\ref{ire}), we get the two maps $P_{8}$ and $P_{48}$ we were
looking for.

In order to show that the compositions $P_{8}\circ {\mathcal
T}_{4}$ and $P_{48}\circ {\mathcal T}_{4}$ are non-trivial, we
will find a particular $\eta\in T^{*}M\otimes\Lambda^{4}(M)$ such
that both $P_{8}\circ {\mathcal T}_{4}$ and $P_{48}\circ {\mathcal
T}_{4}$ take non-zero value on $\eta .$ To define $\eta$, consider
a local positive oriented orthonormal frame $\{ e_{0}, \cdots ,
e_{7}\}$ of $TM$ and its dual frame $\{ e^{0}, \cdots , e^{7}\}$,
such that $\psi$ has the form
\begin{align*}
\psi =
e^{0123}+e^{0145}+e^{0167}+e^{0246}-e^{0257}-e^{0347}-e^{0356}\\
+e^{4567} +e^{2367}+e^{2345}+e^{1357}-e^{1346}-e^{1256}-e^{1247}.
\end{align*}
Define
$$
\alpha_{0}:= i_{e_{0}}\psi \wedge e^{1}- i_{e_{1}}\psi \wedge
e^{0}
$$
and
$$
\eta := e^{0}\otimes \alpha_{0}.
$$
From its very definition, $\alpha_{0}$ is a section of
$\Lambda^{4}_{7}(M)$ and $\eta$ is a section of $T^{*}M\otimes
\Lambda^{4}_{7}(M).$ In the following Lemmas we will compute
$(P\circ {\mathcal T}_{4})(\eta )$. Since
\begin{equation}\label{pc}
(P\circ {\mathcal T}_{4})(\gamma\otimes \alpha ) =
\frac{4}{5}\gamma\wedge p(\alpha ) +\frac{1}{5} e^{k}\wedge p(
\gamma\wedge i_{e_{k}}\alpha ) - \frac{1}{5}e^{k}\wedge
p(e^{k}\wedge i_{\gamma}\alpha ) ,
\end{equation}
for any $\gamma \in T^{*}M$ and $\alpha\in \Lambda^{4}(M)$,  we
need to compute $p(\alpha_{0})$, $e^{k}\wedge p(e^{k}\wedge
i_{e_{0}}\alpha_{0})$ and $e^{k}\wedge p( e^{0}\wedge
i_{e_{k}}\alpha_{0})$. (As usual, we omit the summation sum over
$0\leq k\leq 7$).\\

First, we compute $p(\alpha_{0}).$

\begin{lem}\label{l1}
The $3$-form $p(\alpha_{0})$ has the following expression:
\begin{equation}\label{e1}
p(\alpha_{0}) = -8(e^{01}+e^{23}+e^{45}+e^{67}).
\end{equation}
\end{lem}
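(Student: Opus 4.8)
The plan is to turn the definition of $p$ into a difference of inner products of $3$-forms and then read off the components of the resulting $2$-form on the orthonormal frame. First I would record the pieces that make up $\alpha_{0}$. Since $\psi=e^{0}\wedge\phi_{0}+*_{7}\phi_{0}$, contraction with $e_{0}$ annihilates the second summand and gives $i_{e_{0}}\psi=\phi_{0}=e^{123}+e^{145}+e^{167}+e^{246}-e^{257}-e^{347}-e^{356}$, while a direct contraction with $e_{1}$ gives $i_{e_{1}}\psi=-e^{023}-e^{045}-e^{067}+e^{357}-e^{346}-e^{256}-e^{247}$. Thus $\alpha_{0}=\phi_{0}\wedge e^{1}-(i_{e_{1}}\psi)\wedge e^{0}$ is fully explicit.

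The key simplification is a reformulation of $p$. Using the adjointness $\langle X^{\flat}\wedge\omega,\xi\rangle=\langle\omega,i_{X}\xi\rangle$ together with $i_{X}\beta\wedge Y=-Y^{\flat}\wedge i_{X}\beta$ (as $i_{X}\beta$ has odd degree), one gets $\langle\psi,i_{X}\beta\wedge Y\rangle=-\langle i_{Y}\psi,i_{X}\beta\rangle$, and hence
\begin{equation*}
p(\beta)(X,Y)=\langle i_{X}\psi,i_{Y}\beta\rangle-\langle i_{Y}\psi,i_{X}\beta\rangle .
\end{equation*}
Because $p(\alpha_{0})$ is a $2$-form, it is determined by the numbers $p(\alpha_{0})(e_{i},e_{j})$, and each of these is now a difference of two inner products of explicit $3$-forms, evaluated simply by counting matching monomials. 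The needed contractions $i_{e_{j}}\alpha_{0}$ are short; for example $i_{e_{0}}\alpha_{0}=e^{357}-e^{346}-e^{256}-e^{247}$ and $i_{e_{1}}\alpha_{0}=-e^{246}+e^{257}+e^{347}+e^{356}$, which already yield $p(\alpha_{0})(e_{0},e_{1})=\langle\phi_{0},i_{e_{1}}\alpha_{0}\rangle-\langle i_{e_{1}}\psi,i_{e_{0}}\alpha_{0}\rangle=-4-4=-8$, the coefficient of $e^{01}$.

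I would then repeat this bookkeeping over the remaining index pairs. The main obstacle is purely combinatorial rather than conceptual: there are $\binom{8}{2}=28$ components, and one must track the signs produced by the interior products and by reordering wedge products. Two features keep it in check. First, since $\alpha_{0}$ is built out of the $e_{0},e_{1}$ directions, most of the contractions $i_{e_{j}}\alpha_{0}$ involve only a few monomials and the corresponding components vanish, leaving exactly $e^{01},e^{23},e^{45},e^{67}$ with coefficient $-8$. Second, as a consistency check, $p$ is $\mathrm{Spin}_{7}$-equivariant and $\Lambda^{2}(V_{+}^{*})$ contains the $7$-dimensional module only once, so $p(\alpha_{0})$ is forced to lie in $\Lambda^{2}_{7}(M)$; this guards against sign slips and is consistent with the final value $p(\alpha_{0})=-8(e^{01}+e^{23}+e^{45}+e^{67})$.
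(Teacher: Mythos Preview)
Your proposal is correct and follows essentially the same direct-computation approach as the paper: both expand $\alpha_{0}$ explicitly in the frame and evaluate $p(\alpha_{0})$ from its definition, the only difference being that the paper packages the calculation as the $1$-form $Y\mapsto\langle\psi,i_{X}\alpha_{0}\wedge Y\rangle$ for general $X$ and then antisymmetrizes, while you first use the adjointness identity to rewrite $p(\beta)(X,Y)=\langle i_{X}\psi,i_{Y}\beta\rangle-\langle i_{Y}\psi,i_{X}\beta\rangle$ and proceed component by component. Your representation-theoretic consistency check (that $p(\alpha_{0})$ must land in the $7$-dimensional summand of $\Lambda^{2}$) is a nice safeguard not present in the paper.
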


\begin{proof} From the definition (\ref{def-p}) of the map $p$,
$$
p(\alpha_{0})(X,Y)= \langle \psi , i_{X}\alpha_{0}\wedge Y -
i_{Y}\alpha_{0}\wedge X\rangle ,\quad\forall X, Y\in TM.
$$
Define a $1$-form $\psi (i_{X}\alpha_{0}, \cdot )$ by
$$
\psi (i_{X}\alpha_{0}, Y):= \langle \psi , i_{X}\alpha_{0}\wedge
Y\rangle ,\quad \forall Y\in TM.
$$
With this notation,
\begin{equation}\label{alpha-0}
p(\alpha_{0})(X,Y) =\psi (i_{X}\alpha_{0}, Y) -\psi
(i_{Y}\alpha_{0}, X).
\end{equation}
In terms of the local frame $\{ e^{0}, \cdots ,e^{7}\}$ chosen
above,
\begin{equation}\label{detaliat}
\alpha_{0}= -e^{1246} +e^{1257} +e^{1347} +e^{1356} + e^{0357}
-e^{0346}- e^{0256} -e^{0247}
\end{equation}
and, from a straightforward computation,
\begin{align*}
i_{X}\alpha_{0} &= e^{0}(X) (e^{357} -e^{346} - e^{256}
-e^{247}) +e^{1}(X)( -e^{246} +e^{257}+e^{347}+e^{356})\\
& +e^{2}(X) (e^{146}-e^{157}+e^{056}+e^{047})
+e^{3}(X) (-e^{147}-e^{156}-e^{057}+e^{046})\\
&+e^{4}(X)(-e^{126}+e^{137}-e^{036}-e^{027} )
+e^{5}(X)(e^{127}+e^{136}+e^{037}-e^{026})\\
&+e^{6}(X) (e^{124}-e^{135}+e^{034}+e^{025})
+e^{7}(X)(-e^{125}-e^{134}-e^{035}+e^{024}).
\end{align*}
Using this computation and the expression of $\psi$ in the frame
$\{ e_{0}, \cdots , e_{7}\}$ we get
\begin{align*}
\psi (i_{X}\alpha_{0}, \cdot )&= -4e^{0}(X) e^{1} +4e^{1}(X) e^{0}
-4e^{2}(X) e^{3}+4
e^{3}(X) e^{2}\\
& -4e^{4}(X)e^{5} +4e^{5}(X)e^{4} -4e^{6}(X) e^{7}
+4e^{7}(X)e^{6}.
\end{align*}
Applying this relation to $Y$, skew-symmetrizing the result in $X$
and $Y$ and using (\ref{alpha-0}) we get (\ref{e1}), as required.
\end{proof}

Next, we compute $e^{k}\wedge p(e^{k}\wedge i_{e_{0}}\alpha_{0})$.

\begin{lem}\label{l2} The $3$-form $e^{k}\wedge p(e^{k}\wedge
i_{e_{0}}\alpha_{0})$ has the following expression:
\begin{equation}\label{comut1}
e^{k}\wedge p(e^{k}\wedge i_{e_{0}}\alpha_{0})= -8(e^{045}
 +e^{023} +e^{067}) -6(e^{256} +e^{247}+e^{346}-e^{357}).
\end{equation}
\end{lem}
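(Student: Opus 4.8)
The plan is to compute $e^{k}\wedge p(e^{k}\wedge i_{e_{0}}\alpha_{0})$ directly in the chosen orthonormal frame, mimicking the structure of the proof of Lemma \ref{l1}. First I would assemble the ingredients already available: the explicit expression \eqref{detaliat} for $\alpha_{0}$ and the interior products $i_{X}\alpha_{0}$ computed there. From these I can read off $i_{e_{0}}\alpha_{0} = e^{357}-e^{346}-e^{256}-e^{247}$, which is a $3$-form. The key intermediate object is the collection of $4$-forms $e^{k}\wedge i_{e_{0}}\alpha_{0}$ for $k=0,\dots,7$; each is obtained by wedging $e^{k}$ onto this fixed $3$-form, so many vanish (those $k$ already appearing in a given monomial) and the surviving ones are explicit $4$-monomials.

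\medskip
The heart of the computation is evaluating $p$ on these $4$-forms. I would use the same device as in Lemma \ref{l1}: for a $4$-form $\beta$, form the auxiliary $1$-form $\psi(i_{X}\beta,\cdot)$ defined by $\psi(i_{X}\beta,Y)=\langle\psi, i_{X}\beta\wedge Y\rangle$, and then use the skew-symmetrized formula $p(\beta)(X,Y)=\psi(i_{X}\beta,Y)-\psi(i_{Y}\beta,X)$ from \eqref{alpha-0}. Concretely, for each fixed $k$ I would compute $p(e^{k}\wedge i_{e_{0}}\alpha_{0})$ as a $2$-form, then wedge with $e^{k}$ and sum over $k$. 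The contractions $\langle\psi,\cdot\rangle$ against $5$-monomials (a $4$-form wedged with a $1$-form) reduce to reading off signs from the fourteen terms of $\psi$, exactly as in the previous lemma.

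\medskip
The main obstacle will be bookkeeping rather than conceptual: there are many monomials, the signs in $\psi$ and in $\alpha_{0}$ must be tracked carefully, and the wedge $e^{k}\wedge p(\cdots)$ followed by summation over $k$ produces numerous cancellations and coincidences that must be verified term by term. I expect the coefficients $-8$ and $-6$ in \eqref{comut1} to emerge from how many of the fourteen $\psi$-terms contribute to each resulting monomial, with the split between the $e^{0ab}$ terms (coefficient $-8$) and the purely spatial $e^{abc}$ terms (coefficient $-6$) reflecting the $G_{2}$-decomposition of the $3$-form relative to the distinguished direction $e_{0}$. A useful consistency check at the end is to confirm that the result lies in $\Lambda^{3}(M)$ with the expected symmetry type and that its pairing against $\psi$ and its $\Lambda^{3}_{8}$/$\Lambda^{3}_{48}$ components behave as needed for the subsequent injectivity argument; any sign slip will typically show up as a violation of one of these checks.
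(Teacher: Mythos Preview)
Your proposal is correct and follows essentially the same direct-computation strategy as the paper. The only organizational difference is that the paper first expands $p(e^{k}\wedge i_{e_{0}}\alpha_{0})(X,Y)$ via a Leibniz-type identity into terms with $e^{k}(X)$, $e^{k}(Y)$, and terms involving the $2$-form $\psi(i_{X}(i_{e_{0}}\alpha_{0}),\cdot,\cdot)$, so that the fixed $3$-form $i_{e_{0}}\alpha_{0}$ is contracted with $\psi$ once rather than treating eight separate $4$-forms $e^{k}\wedge i_{e_{0}}\alpha_{0}$; this is just a bookkeeping economy within the same method.
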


\begin{proof}
From the definition (\ref{def-p}) of the map $p$, for any $k\in \{
0,\cdots , 7\}$ and $X, Y\in TM$,
\begin{align*}
p(e^{k}\wedge i_{e_{0}}\alpha_{0})(X,Y) &= e^{k}(X)\langle \psi
,i_{e_{0}}\alpha_{0}\wedge Y\rangle - e^{k}(Y) \langle \psi ,
i_{e_{0}}\alpha_{0}\wedge X\rangle\\
& + \langle\psi , i_{X}(i_{e_{0}}\alpha_{0})\wedge Y\wedge
e_{k}\rangle -\langle\psi , i_{Y}(i_{e_{0}}\alpha_{0})\wedge X
\wedge e_{k}\rangle .
\end{align*}
From
\begin{equation}\label{i}
i_{e_{0}}\alpha_{0}= e^{357}-e^{346}-e^{256}-e^{247}
\end{equation}
and the expression of $\psi$ in the frame $\{ e_{0}, \cdots ,
e_{7}\}$, we get
\begin{equation}\label{E1}
\langle \psi , i_{e_{0}}\alpha_{0}\wedge X\rangle  =
-4e^{1}(X),\quad\forall X\in TM
\end{equation}
and thus
\begin{equation}\label{1k}
e^{k}(X)\langle \psi ,i_{e_{0}}\alpha_{0}\wedge Y\rangle -
e^{k}(Y) \langle \psi , i_{e_{0}}\alpha_{0}\wedge X\rangle =
4e^{1k}(X, Y).
\end{equation}
It remains to compute the terms of the form $\langle \psi ,
i_{X}(i_{e_{0}}\alpha_{0})\wedge Y\wedge e_{k}\rangle .$ For this,
define a $2$-form $\psi (i_{X}(i_{e_{0}}\alpha_{0}),\cdot )$ whose
value on a pair of vectors $(Y, Z)$ is equal to $\langle \psi ,
i_{X}(i_{e_{0}}\alpha_{0} )\wedge Y\wedge Z\rangle$. Taking the
inner product of $i_{e_{0}}\alpha_{0}$ given by (\ref{i}) with
$X\in TM$ and contracting the resulting expression with $\psi$ we
get
\begin{align*}
\psi (i_{X}(i_{e_{0}}\alpha ), \cdot )&= e^{5}(X)
(-2e^{04} -e^{37} +e^{26} +2e^{15})+e^{6}(X)(2e^{07} -e^{34} -e^{25} +2e^{16})\\
&+e^{4}(X)(2e^{05} +e^{36} +e^{27} +2e^{14})+e^{3}(X)(-2e^{02} -e^{46}+2e^{13} +e^{57})\\
&+e^{2}(X)(2e^{03}+2e^{12} -e^{47} -e^{56}) +e^{7}(X) (
-2e^{06}+e^{35} +2e^{17} -e^{24}).
\end{align*}
Applying $\psi (i_{X}(i_{e_{0}}\alpha ),\cdot )$  to $(Y,Z)$ and
skew-symmetrizing the result in $X$ and $Y$ we obtain
\begin{align*}
\langle \psi , i_{X}(i_{e_{0}}\alpha_{0}) \wedge Y\wedge Z -
i_{Y}(i_{e_{0}}\alpha_{0})\wedge X\wedge Z\rangle =
-4(e^{23}+e^{45} +e^{67})(X,Y)e^{0}(Z)\\
+2(e^{05}+e^{36}-e^{14}+e^{27})(X,Y) e^{4}(Z)
+2(e^{35}-e^{06}-e^{24}-e^{17})(X,Y)e^{7}(Z)\\
+2(e^{57}-e^{46}-e^{13}-e^{02})(X,Y)e^{3}(Z)
+2(e^{52}+e^{61}+e^{43}+e^{07})(X,Y)e^{6}(Z)\\
+2(e^{65}+e^{74}+e^{03}+e^{21})(X,Y)e^{2}(Z)
+2(e^{51}+e^{26}+e^{40}+e^{73})(X,Y)e^{5}(Z),\\
\end{align*}
for any $X, Y, Z\in TM.$ Letting in this expression $Z:= e^{k}$
and using the definition of $p$ together with (\ref{1k}) we get
\begin{align*}
p(e^{k}\wedge i_{e_{0}}\alpha_{0}) &= 4e^{1k}
-4\delta_{k0}(e^{23}+e^{45}+e^{67})+2\delta_{k4}(e^{05} +e^{36}
-e^{14} +e^{27})\\
&+2\delta_{k7}(e^{35} -e^{06} -e^{24} -e^{17})+2\delta_{k3}(
e^{57}-e^{46} -e^{13} -e^{02})\\
&+2\delta_{k6}(e^{52}+ e^{61}+ e^{43}+
e^{07})+2\delta_{k2}(e^{65}+e^{74}+e^{03} +e^{21})\\
& +2\delta_{k5} (e^{51}+e^{26} +e^{40} +e^{73}),
\end{align*}
for any fixed $k$. Relation (\ref{comut1}) follows now easily.
\end{proof}

Finally, it remains to compute $e^{k}\wedge p(e^{0}\wedge
i_{e_{k}}\alpha_{0}).$

\begin{lem}\label{l3} The $3$-form $e^{k}\wedge p(e^{0}\wedge
i_{e_{k}}\alpha_{0})$ has the following expression:
$$
e^{k}\wedge p(e^{0}\wedge i_{e_{k}}\alpha_{0}) =
6(e^{247}-e^{357}+e^{256}+e^{346}).
$$
\end{lem}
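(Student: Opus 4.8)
The plan is to follow the same scheme as in Lemma \ref{l2}, but to exploit a simplification that ties one half of the computation back to Lemma \ref{l1}. Write $\mu_{k}:=i_{e_{k}}\alpha_{0}$, so the required contractions $\mu_{k}$ can be read off directly from the expansion of $i_{X}\alpha_{0}$ displayed in the proof of Lemma \ref{l1}. Using the definition (\ref{def-p}) of $p$ (in the form (\ref{alpha-0})) together with $i_{X}(e^{0}\wedge\mu_{k})=e^{0}(X)\mu_{k}-e^{0}\wedge i_{X}\mu_{k}$, I would expand
\begin{align*}
p(e^{0}\wedge\mu_{k})(X,Y) &= e^{0}(X)\langle\psi,\mu_{k}\wedge Y\rangle - e^{0}(Y)\langle\psi,\mu_{k}\wedge X\rangle\\
&\quad - \langle\psi,\, e^{0}\wedge i_{X}\mu_{k}\wedge Y\rangle + \langle\psi,\, e^{0}\wedge i_{Y}\mu_{k}\wedge X\rangle,
\end{align*}
and organize the four terms into a \emph{Group A} (the first two) and a \emph{Group B} (the last two), treating the contributions of each to $\sum_{k}e^{k}\wedge p(e^{0}\wedge\mu_{k})$ separately.

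For Group A, set $g_{k}(Z):=\langle\psi,\mu_{k}\wedge Z\rangle$, so that the first two terms constitute the $2$-form $e^{0}\wedge g_{k}$. The key observation is that $\sum_{k}e^{k}\wedge g_{k}=p(\alpha_{0})$: since $\sum_{k}e^{k}(X)e_{k}=X$ one has $\sum_{k}e^{k}(X)g_{k}(Y)=\langle\psi,i_{X}\alpha_{0}\wedge Y\rangle$, and skew-symmetrizing in $X,Y$ reproduces exactly (\ref{alpha-0}). Hence the whole Group A contribution is $\sum_{k}e^{k}\wedge(e^{0}\wedge g_{k})=-e^{0}\wedge p(\alpha_{0})$, which by Lemma \ref{l1} equals $8(e^{023}+e^{045}+e^{067})$ with no further work.

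For Group B, I would use the adjunction $\langle\psi,\,e^{0}\wedge\omega\rangle=\langle i_{e_{0}}\psi,\,\omega\rangle=\langle\varphi,\omega\rangle$, where $\varphi:=i_{e_{0}}\psi=e^{123}+e^{145}+e^{167}+e^{246}-e^{257}-e^{347}-e^{356}$ is the induced $G_{2}$-form, to rewrite $\langle\psi,\,e^{0}\wedge i_{X}\mu_{k}\wedge Y\rangle=\langle i_{Y}\varphi,\,i_{X}i_{e_{k}}\alpha_{0}\rangle$. Summing $e^{k}\wedge(\cdot)$ over $k$ and skew-symmetrizing, the Group B contribution collapses to $-2$ times the cyclic sum of the trilinear form $H(X,Y,Z):=\langle i_{Z}\varphi,\,i_{Y}i_{X}\alpha_{0}\rangle$; since $H$ is antisymmetric in its first two slots, this cyclic sum is fully antisymmetric and so is a genuine $3$-form, which I would then evaluate directly from the explicit $\varphi$ and the contractions $i_{e_{k}}\alpha_{0}$.

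The main obstacle is precisely this Group B evaluation: unlike Group A it does not reduce to a previously computed object, and completing it requires the same careful bookkeeping of the inner products $\langle\psi,\,i_{X}i_{e_{k}}\alpha_{0}\wedge Y\rangle$ as appears in Lemma \ref{l2} (indeed Group B could alternatively be handled by computing the $2$-form $\psi(i_{X}i_{e_{k}}\alpha_{0},\cdot)$ for each $k$, exactly mirroring that lemma). A good consistency check is that the final $3$-form contains no $e^{0}$, so the $e^{0}$-part of Group B must equal $-8(e^{023}+e^{045}+e^{067})$ and cancel Group A exactly; adding the two groups then leaves the stated value $6(e^{247}-e^{357}+e^{256}+e^{346})$.
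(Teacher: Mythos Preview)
Your proposal is correct and follows essentially the same decomposition as the paper: both split $p(e^{0}\wedge i_{e_{k}}\alpha_{0})$ via $i_{X}(e^{0}\wedge\mu_{k})=e^{0}(X)\mu_{k}-e^{0}\wedge i_{X}\mu_{k}$, and both reduce the second piece to a pairing against the $G_{2}$-form $\varphi=i_{e_{0}}\psi$ (what the paper calls $\phi$ and packages as $\beta_{k}$). Your identification of Group~A with $-e^{0}\wedge p(\alpha_{0})$ via Lemma~\ref{l1} is a clean shortcut the paper does not make explicit---it instead lists the $\delta_{ki}$ terms directly---but the remaining Group~B computation is exactly the paper's $e^{k}\wedge\beta_{k}$ step, and your consistency check (that the $e^{0}$-parts cancel) is precisely the cancellation the paper obtains between $8(e^{023}+e^{045}+e^{067})$ and the corresponding piece of~(\ref{betak}).
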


\begin{proof}
From the definition of the map $p$, one can check that
\begin{align*}
p(e^{0}\wedge i_{e_{k}}\alpha_{0} )(X,Y)&= 4 (-\delta_{k0}e^{01}
-\delta_{k2}e^{03}+\delta_{k3}e^{02}-\delta_{k4}e^{05})(X,Y)\\
& +4(\delta_{k5}e^{04}-\delta_{k6}e^{07}
+\delta_{k7}e^{06})(X,Y)\\
&-\langle \phi , i_{X}(i_{e_{k}}\alpha_{0})\wedge Y -
i_{Y}(i_{e_{k}}\alpha_{0})\wedge X\rangle ,
\end{align*}
where
$$
\phi := e^{123} +e^{145} + e^{167}+ e^{246} - e^{257} -e^{347} -
e^{356}
$$
and $k\in \{ 0,\cdots , 7\}$ is fixed. Note that
\begin{align*}
&e^{k}\wedge ( -\delta_{k0}e^{01}
-\delta_{k2}e^{03}+\delta_{k3}e^{02}-\delta_{k4}e^{05}
+\delta_{k5}e^{04}-\delta_{k6}e^{07} +\delta_{k7}e^{06}))\\
&=2(e^{023}+ e^{045} +e^{067}).
\end{align*}
For any $1\leq k\leq 7$ define a $2$-form by
$$
\beta_{k}(X, Y) = \langle \phi , i_{X}(i_{e_{k}}\alpha_{0})\wedge
Y - i_{Y}(i_{e_{k}}\alpha_{0})\wedge X\rangle .
$$
From a long but straightforward computation which uses the
expressions of $\alpha_{0}$ and $\phi$ in the frame $\{ e_{0},
\cdots , e_{7}\}$,
\begin{equation}\label{betak}
e^{k}\wedge\beta_{k}= 6(-e^{247} +e^{357}-e^{256} -e^{346})
+8(e^{023} +e^{045}+ e^{067}).
\end{equation}
Combining (\ref{betak}) with the expression of $p(e^{0}\wedge
i_{e_{k}}\alpha )$ we get our claim.
\end{proof}

The following Lemma concludes the proof of Proposition
\ref{a-spin}.

\begin{lem}\label{FINAL} The value of $P\circ {\mathcal T}_{4}$ on $\eta_{0}$
has the following expression:
\begin{equation}\label{fin}
5(P\circ {\mathcal T}_{4})(\eta_{0}) =
-24(e^{023}+e^{045}+e^{067}) +12(e^{247}-e^{357}+e^{256}+e^{346}).
\end{equation}
In particular, $(P_{8}\circ {\mathcal T}_{4})(\eta_{0})$ and
$(P_{48}\circ {\mathcal T}_{4})(\eta_{0})$ are non-zero.
\end{lem}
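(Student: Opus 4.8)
The plan is to assemble the final formula (\ref{fin}) directly from the three preceding Lemmas by substituting their outputs into the general expansion (\ref{pc}) of $P\circ{\mathcal T}_4$. Recall that for $\eta_0 = \eta = e^0\otimes\alpha_0$, equation (\ref{pc}) specializes to
\begin{equation*}
(P\circ{\mathcal T}_4)(e^0\otimes\alpha_0)=\frac{4}{5}e^0\wedge p(\alpha_0)+\frac{1}{5}e^k\wedge p(e^0\wedge i_{e_k}\alpha_0)-\frac{1}{5}e^k\wedge p(e^k\wedge i_{e_0}\alpha_0),
\end{equation*}
where $\gamma=e^0$ so that $i_\gamma\alpha_0 = i_{e_0}\alpha_0$. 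The three summands are governed respectively by Lemma \ref{l1}, Lemma \ref{l3}, and Lemma \ref{l2}. So the first step is simply to wedge $e^0$ with the right-hand side of (\ref{e1}): since $e^0\wedge e^{01}=0$, this kills the $e^{01}$ term and leaves $\frac{4}{5}e^0\wedge p(\alpha_0) = -\frac{32}{5}(e^{023}+e^{045}+e^{067})$.

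Next I would multiply (\ref{betak})--(\ref{comut1}) by $\frac{1}{5}$ with the appropriate signs. From Lemma \ref{l3} the middle term contributes $\frac{1}{5}\cdot 6(e^{247}-e^{357}+e^{256}+e^{346})=\frac{6}{5}(e^{247}-e^{357}+e^{256}+e^{346})$. From Lemma \ref{l2}, the third term enters with a minus sign, giving $-\frac{1}{5}\bigl[-8(e^{045}+e^{023}+e^{067})-6(e^{256}+e^{247}+e^{346}-e^{357})\bigr]=\frac{8}{5}(e^{023}+e^{045}+e^{067})+\frac{6}{5}(e^{256}+e^{247}+e^{346}-e^{357})$. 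The only bookkeeping to watch here is the sign pattern in the $(e^{247},e^{357},e^{256},e^{346})$ block, which appears in both Lemma \ref{l2} and Lemma \ref{l3}; I must check that these two $3$-form contributions share the same sign arrangement (namely $+e^{247}-e^{357}+e^{256}+e^{346}$) so that they add constructively rather than partially cancel.

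Adding the three pieces, the $(e^{023}+e^{045}+e^{067})$ coefficient totals $-\frac{32}{5}+\frac{8}{5}=-\frac{24}{5}$, and the $(e^{247}-e^{357}+e^{256}+e^{346})$ coefficient totals $\frac{6}{5}+\frac{6}{5}=\frac{12}{5}$. Multiplying through by $5$ yields exactly (\ref{fin}). The main obstacle is therefore not conceptual but bookkeeping: keeping the signs straight between the two blocks of basis $3$-forms and confirming the overall normalization, since a single dropped sign would break the clean cancellation.

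Finally, for the last sentence of the Lemma I would argue that the right-hand side of (\ref{fin}) has non-trivial projection onto each summand of the decomposition (\ref{ire}). Since $\Lambda^3_8(M)$ and $\Lambda^3_{48}(M)$ are characterized by $\beta\wedge\psi$ (with $\Lambda^3_{48}$ the kernel of $\beta\mapsto\beta\wedge\psi$), I would compute the wedge product of $5(P\circ{\mathcal T}_4)(\eta_0)$ with $\psi$: if this is non-zero then $(P_8\circ{\mathcal T}_4)(\eta_0)\neq 0$, while the fact that the full $3$-form is not itself annihilated up to that $\Lambda^3_8$ piece forces $(P_{48}\circ{\mathcal T}_4)(\eta_0)\neq 0$ as well. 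Together with Schur's lemma and (\ref{ire}), the non-vanishing of both projections establishes the injectivity claimed in Proposition \ref{a-spin}.
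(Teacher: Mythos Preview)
Your proposal is correct and follows exactly the paper's approach: the paper likewise derives (\ref{fin}) by plugging the three preceding Lemmas into (\ref{pc}), and then distinguishes the two projections by computing $5(P\circ{\mathcal T}_4)(\eta_0)\wedge\psi=-24\,e^{0234567}\neq 0$ (giving $P_8\neq 0$) and checking that the result is not of the form $i_X\psi$ (giving $P_{48}\neq 0$). Your write-up actually spells out the arithmetic more fully than the paper does; the only slip is the stray reference to (\ref{betak}), which lives in the proof of Lemma~\ref{l3} rather than Lemma~\ref{l2}, but your subsequent computation uses the correct formula (\ref{comut1}).
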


\begin{proof} Relation (\ref{fin}) follows from relation (\ref{pc}) and the previous Lemmas.
A direct check shows that (\ref{fin}) is not of the form
$i_{X}\psi$, for $X\in TM$  and thus $(P_{8}\circ {\mathcal
T}_{48}) (\eta_{0})$ is non-zero. Moreover it can be checked that
$$
5(P\circ {\mathcal T}_{4})(\eta_{0})\wedge \psi = -24 e^{0234567}.
$$
In particular,  $(P_{8}\circ {\mathcal T})(\eta_{0})$ is also
non-zero.
\end{proof}

The proof of Proposition \ref{a-spin} is now completed.
Proposition \ref{p-spin} and Theorem \ref{main} {\it ii)} follow.

LIANA DAVID: Institute of Mathematics Simion Stoilow of the
Romanian Academy, Calea Grivitei no. 21, Sector 1, Bucharest,
Romania; e-mail: liana.david@imar.ro

\end{document}